\theoremstyle{plain}
\newtheorem{Pocz}{Poczatek}[section]
\newtheorem{Proposition}[Pocz]{Proposition}
\newtheorem{Theorem}[Pocz]{Theorem}
\newtheorem{Corollary}[Pocz]{Corollary}
\newtheorem{Lemma}[Pocz]{Lemma}
\newtheorem{Problem}[Pocz]{Problem}
\theoremstyle{definition}
\newtheorem{Definition}[Pocz]{Definition}
\theoremstyle{remark}
\newtheorem{Remark}[Pocz]{Remark}
\DeclareMathOperator*{\dist}{dist}
\DeclareMathOperator*{\diam}{diam}
\def\asdim{\mathrm{asdim}}
\def\card{\mathrm{card}}
\def\dim{\mathrm{dim}}
\def\diam{\mathrm{diam}}
\numberwithin{equation}{section}
\newcommand{\X}{\mathcal{X}}
\newcommand{\N}{{\mathbb{N}}}
\def\lsdim{\mathrm{LsExtDim}}
\title[
Coarse amenability and discreteness
]%
  {Coarse amenability and discreteness}
\author{Jerzy ~Dydak}
\address{University of Tennessee, Knoxville, USA}
\email{jdydak@utk.edu}
\date{ \today
}
\keywords{absolute extensors, asymptotic dimension, coarse geometry, coarse amenability, Lipschitz maps, Property A}
\subjclass[2000]{Primary 54F45; Secondary 55M10}
\begin{document}
\maketitle

\tableofcontents

\begin{abstract}
This paper is devoted to dualization of paracompactness to the coarse category via the concept of $R$-disjointness. Property A of  G.Yu can be seen as a coarse variant of amenability via partitions of unity and leads to a dualization of paracompactness via partitions of unity. On the other hand, finite decomposition complexity of Erik Guentner,  Romain Tessera, and  G.Yu and straight finite decomposition complexity of Dranishnikov-Zarichnyi employ $R$-disjointness as the main concept. We generalize both concepts to that of countable asymptotic dimension and our main result shows that it is a subclass of of spaces with Property A. In addition, it gives a necessary and sufficient condition for spaces of countable asymptotic dimension to be of finite asymptotic dimension.
\end{abstract}

\section{Introduction}

\textbf{Property A} of G.Yu \cite{Yu00} was introduced in the context of the Novikov Conjecture.
It is a large scale variant of amenability. See \cite{Willett} for a survey of results on Property A. 
Subsequently, it was generalized to the concept of \textbf{exact spaces} by Dadarlat and Guentner \cite{DaGu}. In \cite{CDV2} exact spaces were narrowed down to \textbf{large scale paracompact spaces}
and \cite{CDV4} (see also \cite{CDV3}) contains an analysis of interrelationships between various concepts.

As explained in \cite{CDV4} all the above concept can be unified using existence (for each $\epsilon > 0$) of
$(\epsilon,\epsilon)$-Lipschitz (see \ref{LipschitzFunctionsDef}) partitions of unity
$f:X\to \Delta(S)$ (see \ref{PUDef}) that are cobounded (see \ref{CoboundedDef}).
Property A corresponds to $f$ being a barycentric partition of unity (see \ref{PUDef}),
exact spaces correspond to arbitrary partitions of unity, and large scale paracompact spaces
correspond to the case of $f$ having Lebesgue number at least $\frac{1}{\epsilon}$ (see \ref{PUDef}).

One may summarize that the three concepts (Property A, exact spaces, large scale paracompact spaces) deal with dualizing paracompactness via partitions of unity. In \cite{CDV4} the concept
of \textbf{Strong Property A} was introduced as a way of dualizing paracompactness via covers.

This paper is devoted to developing large scale paracompactness from the point of view of discreteness. More precisely, it deals with dualizing the following two classical results of general topology:

\begin{Theorem}[Michael-Nagami \cite{En}] \label{Michael-NagamiThm}
A Hausdorff space $X$ is paracompact if and only if it is weakly paracompact and collectionwise normal.
\end{Theorem}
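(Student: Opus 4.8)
The plan is to establish both implications, with all the content in the direction ``weakly paracompact and collectionwise normal $\Rightarrow$ paracompact''. The reverse implication is routine: a locally finite open refinement is in particular point-finite, and a paracompact Hausdorff space is collectionwise normal --- given a discrete family $\{F_a\}$ of closed sets, cover $X$ by $X\setminus\bigcup_a F_a$ together with the open sets $X\setminus\overline{\bigcup_{b\neq a}F_b}$ (each of which contains $F_a$), pass to a locally finite open refinement, and star it against the sets $F_a$.

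For the forward implication, fix an open cover $\mathcal U=\{U_s\}_{s\in S}$. First I would reduce to the point-finite case: weak paracompactness gives a point-finite open refinement $\{V_t\}$; choosing $s(t)$ with $V_t\subseteq U_{s(t)}$ and putting $U'_s:=\bigcup\{V_t:s(t)=s\}$ gives a point-finite open cover refining $\mathcal U$ with index set $S$, so we may assume $\mathcal U$ itself is point-finite. Then I introduce the order function $r(x):=\card\{s:x\in U_s\}<\infty$; since $\{x:r(x)\geq k\}$ is a union of finite intersections of the $U_s$, hence open, the sets $C_i:=\{x:r(x)\leq i\}$ are closed and increase to $X$. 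For finite $\sigma\subseteq S$ with $|\sigma|=i$ write $U_\sigma:=\bigcap_{s\in\sigma}U_s$ and $H_\sigma:=U_\sigma\cap C_i$; one checks $H_\sigma=\{x:\{s:x\in U_s\}=\sigma\}$, so the $H_\sigma$ with $|\sigma|=i$ partition $C_i\setminus C_{i-1}$.

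The core of the proof is a recursion on $i$ producing \emph{discrete} open families $\mathcal V_i$ refining $\mathcal U$ with $C_i\subseteq G_i:=\bigcup_{j\leq i}\bigcup\mathcal V_j$. Given $G_{i-1}$ (open, containing $C_{i-1}$), set $D_\sigma:=H_\sigma\setminus G_{i-1}$ for $|\sigma|=i$. The step I expect to be the main obstacle is checking that $\{D_\sigma:|\sigma|=i\}$ is a discrete family of \emph{closed} subsets of $X$. For closedness, $\overline{H_\sigma}\subseteq C_i$ and $C_{i-1}\subseteq G_{i-1}$ force $\overline{D_\sigma}\subseteq C_i\setminus G_{i-1}\subseteq\{r=i\}$, and at a point $x$ with $r(x)=i$ and $\tau:=\{s:x\in U_s\}$ the open neighbourhood $U_\tau$ meets $H_\rho$ (for $|\rho|=i$) only when $\rho=\tau$, whence $x\in H_\sigma\setminus G_{i-1}=D_\sigma$. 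For discreteness, a point of the open set $G_{i-1}$ has that neighbourhood, disjoint from every $D_\sigma$; and a point $x\notin G_{i-1}$ has $r(x)\geq i$, with $U_\tau$ (for $\tau=\{s:x\in U_s\}$) meeting at most $D_\tau$ if $r(x)=i$ and no $D_\sigma$ if $r(x)>i$. Collectionwise normality then separates $\{D_\sigma\}$ by a discrete open family $\{W_\sigma\}$; shrinking each $W_\sigma$ to $W_\sigma\cap U_\sigma$ keeps discreteness, refines $\mathcal U$, and preserves $D_\sigma\subseteq W_\sigma$, so $\mathcal V_i:=\{W_\sigma\cap U_\sigma:|\sigma|=i\}$ does the job and $C_i\subseteq G_{i-1}\cup\bigcup\mathcal V_i=G_i$.

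Finally, $\bigcup_i\mathcal V_i$ is a $\sigma$-locally finite open refinement of $\mathcal U$ covering $\bigcup_i C_i=X$, and since $X$ is Hausdorff and collectionwise normal it is regular; so the classical theorem of Michael --- a regular space in which every open cover admits a $\sigma$-locally finite open refinement is paracompact --- concludes the argument. I would stress that one cannot expect $\bigcup_i\mathcal V_i$ to be locally finite outright: a sequence converging to a point while leaving every $C_{i-1}$ can meet $\mathcal V_i$ for infinitely many $i$, which is precisely why passing through the $\sigma$-locally finite characterisation, rather than attempting a one-step construction, is the right move.
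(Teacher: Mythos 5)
This theorem appears in the paper only as quoted classical background (Michael--Nagami, cited to Engelking) with no proof supplied, so there is no internal argument to compare against; judged on its own, your proof of the substantive implication is correct and is essentially the standard textbook argument. The reduction to a point-finite cover, the multiplicity strata $C_i=\{r\leq i\}$, the identification $H_\sigma=\{x:\{s:x\in U_s\}=\sigma\}$, the verification that $\{D_\sigma=H_\sigma\setminus G_{i-1}\}$ is a discrete family of closed sets, the expansion by collectionwise normality, and the conclusion via the $\sigma$-discrete (hence $\sigma$-locally finite) refinement criterion are all sound; note that the final step is exactly Theorem \ref{DiscreteCharOfParacompactness} of this paper, with regularity coming from Hausdorff plus normality (collectionwise normality forces normality, since two disjoint closed sets form a discrete family). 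One small point worth a line: collectionwise normality is usually stated as giving pairwise \emph{disjoint} open expansions; the \emph{discrete} open expansion you invoke is the standard equivalent form obtained by intersecting the disjoint expansion with an open set $W$ satisfying $\bigcup_\sigma D_\sigma\subseteq W\subseteq\overline{W}\subseteq\bigcup_\sigma V_\sigma$, and you should either cite or include that upgrade, since discreteness (not mere disjointness) is what makes the refinement $\sigma$-discrete.

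In the direction you call routine there is a genuine slip as written: after passing to a locally finite open refinement $\mathcal W$ of your cover, the stars $\st(F_a,\mathcal W)=\bigcup\{W\in\mathcal W: W\cap F_a\neq\emptyset\}$ do contain the $F_a$ and are open, but they need not be pairwise disjoint, because a member of $\mathcal W$ meeting $F_a$ may intersect a member meeting $F_b$. The classical repair uses regularity and local finiteness: refine so that the \emph{closures} of the members of $\mathcal W$ each meet at most one $F_a$, and then put $W_a=X\setminus\bigcup\{\overline{V}:V\in\mathcal W,\ \overline{V}\cap F_a=\emptyset\}$, which is open because a locally finite union of closed sets is closed; these $W_a$ are disjoint open sets around the $F_a$. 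With that correction (or simply a citation of Bing's theorem that paracompact Hausdorff spaces are collectionwise normal), your proof is complete.
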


\begin{Theorem}\label{DiscreteCharOfParacompactness}(see Theorem 5.1.12 in \cite{En}, p.303)
A regular space 
$X$ is paracompact if and only if every open cover has a $\sigma$-discrete open refinement.
\end{Theorem}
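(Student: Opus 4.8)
The plan is to prove the two implications separately; as is typical for this circle of results, the substance splits into ``$X$ paracompact $\Rightarrow$ every open cover has a $\sigma$-discrete open refinement'' and its converse, and neither direction is a one-line matter. For the converse I would run the classical chain of implications, invoking regularity at the appropriate spots. A $\sigma$-discrete family is $\sigma$-locally finite, so it is enough to show that if every open cover of $X$ has an open $\sigma$-locally finite refinement then $X$ is paracompact. Given an open cover $\mathcal U$, choose such a refinement $\mathcal V=\bigcup_{n\in\NN}\mathcal V_n$ with each $\mathcal V_n$ locally finite, put $H_n=\bigcup\mathcal V_n$, and for $V\in\mathcal V_n$ set $E_V=V\setminus\bigcup_{i<n}H_i$; then $\{E_V\}$ is a locally finite (not necessarily open) refinement of $\mathcal U$ — it covers because if $n$ is least with $x\in H_n$ then $x\in E_V$ for some $V\in\mathcal V_n$, and it is locally finite because $H_n$ is then a neighbourhood of $x$ meeting no $E_V$ with $V\in\mathcal V_m$, $m>n$, while for each $m\le n$ a small neighbourhood meets only finitely many members of $\mathcal V_m$. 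Next, using regularity, pass to a closed locally finite refinement: first refine $\mathcal U$ by the open cover consisting of all $W$ with $\overline W$ inside some member of $\mathcal U$, take a locally finite refinement $\mathcal A$ of that, and replace each $A$ by $\overline A$. Finally, upgrade a closed locally finite refinement $\mathcal F$ to a locally finite \emph{open} one: cover $X$ by open sets each meeting only finitely many members of $\mathcal F$, take a closed locally finite refinement $\mathcal G$ of this cover, and for $F\in\mathcal F$ put $F^{*}=X\setminus\bigcup\{G\in\mathcal G:\ G\cap F=\emptyset\}$ (open, since $\mathcal G$ is locally finite) and $V_F=F^{*}\cap U_F$ for a chosen $U_F\in\mathcal U$ with $F\subseteq U_F$; the family $\{V_F\}$ is then a locally finite open refinement of $\mathcal U$, so $X$ is paracompact.

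For the forward direction I would show that a paracompact regular (hence normal) space $X$ has, for every open cover $\mathcal U$, an open $\sigma$-discrete refinement. One clean route: iterate the fact that every open cover of a paracompact space has an open star-refinement to build a sequence $\mathcal U=\mathcal W_0\succ\mathcal W_1\succ\mathcal W_2\succ\cdots$ in which each $\mathcal W_{k+1}$ star-refines $\mathcal W_k$; this data yields a continuous pseudometric $d$ on $X$ whose small balls refine $\mathcal U$, and one then applies the Stone-type theorem for (pseudo)metric spaces — every open cover of a (pseudo)metric space has an open $\sigma$-discrete refinement — noting that $d$-balls are $X$-open and that $d$-discreteness implies $X$-discreteness. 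Alternatively one can construct the refinement directly: reduce to a locally finite open cover $\{V_s\}_{s\in S}$ with each $\overline{V_s}$ inside a member of $\mathcal U$, well-order $S$, shrink repeatedly (normality) to locally finite open covers with $\overline{V^{(n+1)}_s}\subseteq V^{(n)}_s$, and ``peel off'', at the $n$-th stage and for each $s$, the open set obtained from $V^{(n)}_s$ by deleting the closed set $\bigcup\{\overline{V^{(n+1)}_t}:\ t\neq s\}$ (and the parts already captured by smaller indices), then check that each stage is a discrete family and that the stages together cover $X$.

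The main obstacle is exactly this last point in the forward direction: arranging the peeling so that \emph{countably many} discrete stages already exhaust $X$ while every stage stays genuinely discrete. Naive stratifications fail — indexing by a single sequence of ``levels'', or by a single sequence of thresholds in a partition of unity subordinate to $\mathcal U$, leaves gaps or leaves uncovered the points lying permanently in several members of every shrunk cover — and resolving it is precisely the combinatorial core of A.~H.~Stone's theorem. In the write-up I would therefore either quote that theorem after the pseudometric reduction, or carry out the transfinite-to-countable bookkeeping by hand, using the well-ordering of $S$ restricted to the finite stabilized index set of each point to decide, within each stage, which $s$ is responsible for that point.
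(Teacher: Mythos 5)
The paper offers no proof of this statement at all: it is quoted as a classical result with a pointer to Theorem 5.1.12 of \cite{En}, so there is no in-paper argument to compare yours against, and reconstructing the textbook proof is the only thing one could do here. Your outline is, in substance, that textbook proof and it is correct. The ``if'' direction is E. Michael's chain of implications: a $\sigma$-discrete family is $\sigma$-locally finite; the peeling $E_V=V\setminus\bigcup_{i<n}H_i$ gives a locally finite (not open) refinement; regularity upgrades this to a closed locally finite refinement; and the sets $F^{*}=X\setminus\bigcup\{G\in\mathcal G:\,G\cap F=\emptyset\}$ intersected with chosen members of $\mathcal U$ give a locally finite open refinement -- all the verifications you indicate (least $n$ for the covering claim, $H_n$ as the killing neighbourhood, each $G$ meeting only finitely many $F^{*}$) go through. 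The ``only if'' direction, as you yourself admit, is carried entirely by A.~H.~Stone's theorem: after iterated star-refinement (note that the existence of open star-refinements uses normality, i.e.\ paracompact plus regular -- say this explicitly) and the normal-sequence-to-pseudometric lemma, you invoke the statement that every open cover of a (pseudo)metric space has a $\sigma$-discrete open refinement, which is exactly the metric case of the theorem being proved, established by the Stone/Rudin transfinite ball construction. Quoting it is legitimate, and the reduction (continuity of $d$ makes $d$-open sets open and $d$-discrete families discrete, and small balls refine $\mathcal U$) is sound; but your ``alternative'' direct construction is only gestured at, and your closing paragraph concedes the countable stratification there is the real content. So in a final write-up either cite Stone's theorem precisely (e.g.\ Theorem 4.4.1 of \cite{En}) or carry out the Rudin-style construction; as it stands your text proves the theorem modulo that citation, which is no worse than the paper, which cites Engelking for the whole statement.
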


Since topological discreetness naturally dualizes to $R$-disjointness (see \ref{R-disjointDef}),
one arises at the following question:

\begin{Problem}\label{RDisjointnessProblem}
Characterize metric spaces $X$ such that for each $R > 0$ there exists $M > 0$ and a finite sequence
of $R$-disjoint families $\mathcal{U}_n$, $i\leq k$, such that $X=\bigcup\limits_{i=1}^k \mathcal{U}_n$
and diameters of elements of each $\mathcal{U}_n$ are at most $M$.
\end{Problem}

It turns out special cases of \ref{RDisjointnessProblem} were considered in the past.
The most restrictive property expressed in terms of $R$-disjointness is the following.

\begin{Definition} [Dranishnikov \cite{Dran AsyTop}]
A metric space $X$ has \textbf{asymptotic property C} if for every sequence $R_1 < R_2 < \ldots$ there exists $n\in \mathbb{N}$
such that $X$ is the union of $R_i$-disjoint families $\mathcal{U}_i$, $1\leq i\leq n$,
that are uniformly bounded.
\end{Definition}

Subsequently, E.Guentner,  R.Tessera, and G.Yu introduced the concept of \textbf{finite decomposition complexity} (see \cite{YuNo})
which was weakened as follows:

\begin{Definition}
[Dranishnikov-Zarichnyi \cite{DranZari}]
 $X$ is of \textbf{straight finite decomposition complexity}  if 
for any increasing sequence of positive real numbers $R_1 < R_2 < \ldots$ there a sequence
$\mathcal{V}_i$, $i \leq n$, of families of subsets of $X$ such that the following conditions are satisfied:
\begin{itemize}
\item[1.] $\mathcal{V}_1=\{X\}$,
\item[2.] each element $U\in \mathcal{V}_i$, $i < n$, can be expressed as a union of at most $2$ families from $\mathcal{V}_{i+1}$ that are $R_i$-disjoint,
\item[3.] $\mathcal{V}_n$ is uniformly bounded.
\end{itemize}
\end{Definition}

It turns out straight finite decomposition complexity is a variant of coarse amenability:
\begin{Theorem}
[Dranishnikov-Zarichnyi \cite{DranZari}] \label{DranZariResult}
 Every space of straight finite decomposition complexity has Property A.
\end{Theorem}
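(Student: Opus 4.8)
The plan is to verify Property A in the partition-of-unity form recalled in the introduction: for every $R>0$ and $\epsilon>0$ one must produce a cobounded partition of unity $f\colon X\to\Delta(X)$ with $\operatorname{supp} f_x\subseteq B(x,S)$ for some finite $S=S(R,\epsilon)$ and $\|f_x-f_y\|_1\le\epsilon$ whenever $d(x,y)\le R$ (here $\|\cdot\|_1$ is the $\ell^1$-metric on $\Delta(X)$). Fix $R,\epsilon$ and, \emph{before invoking the hypothesis}, fix data depending only on $R,\epsilon$: enlargement radii $D_1<D_2<\cdots$ growing very fast, say $D_{i+1}=D_i^{2}$ with $D_1$ large; auxiliary scales $\rho_1:=R$ and $\rho_{i+1}:=\rho_i+2D_i$; thresholds $R_i:=\rho_i+4D_i$; and numbers $\epsilon_n:=0$, $\epsilon_i:=\epsilon_{i+1}+C\rho_i/D_i$, where $C$ is the absolute constant of the estimate below. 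One checks $\rho_i\le R+4D_{i-1}$, hence $\rho_i/D_i\to0$ summably, so $D_1$ may be taken large enough that $\epsilon_1=\sum_{i=1}^{n-1}C\rho_i/D_i\le\epsilon$ regardless of $n$. Now apply straight finite decomposition complexity to $R_1<R_2<\cdots$, obtaining $\mathcal V_1=\{X\},\mathcal V_2,\dots,\mathcal V_n$ as in the definition, with $\mathcal V_n$ uniformly bounded by some $S_0$; the numbers $n,S_0$ and the families $\mathcal V_i$ depend on $X$, but the $D_i,\rho_i,R_i$ do not.

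The partition of unity is assembled by downward induction on $i$, with inductive claim: for every $W\in\mathcal V_i$ there is $\xi^W\colon W\to\Delta(X)$, valued in finitely supported probability measures supported within $B(\,\cdot\,,S_i)$ where $S_i:=S_0+\sum_{i\le k<n}D_k$, such that $\|\xi^W_x-\xi^W_y\|_1\le\epsilon_i$ whenever $d(x,y)\le\rho_i$, uniformly in $W$. For $i=n$: pick a basepoint $w_W\in W$ and set $\xi^W_x:=\delta_{w_W}$; this is constant in $x$, so the variation is $0=\epsilon_n$ at every scale, and $\operatorname{supp}\xi^W_x=\{w_W\}\subseteq B(x,\diam W)\subseteq B(x,S_0)$. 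Assume the claim at level $i+1$ and take $U\in\mathcal V_i$; write $U=\widetilde A\cup\widetilde B$ with $\widetilde A=\bigcup\mathcal A$, $\widetilde B=\bigcup\mathcal B$, where $\mathcal A,\mathcal B\subseteq\mathcal V_{i+1}$ are $R_i$-disjoint. Put $\xi^{\widetilde A}_x:=\xi^W_x$ for the unique $W\in\mathcal A$ containing $x$; since $\mathcal A$ is $R_i$-disjoint and $\rho_{i+1}=\rho_i+2D_i<R_i$, any two points of $\widetilde A$ within distance $\rho_{i+1}$ lie in a common $W$, so $\|\xi^{\widetilde A}_x-\xi^{\widetilde A}_y\|_1\le\epsilon_{i+1}$ for $d(x,y)\le\rho_{i+1}$; likewise for $\widetilde B$.

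The delicate point is gluing $\xi^{\widetilde A}$ and $\xi^{\widetilde B}$ over $U=\widetilde A\cup\widetilde B$: a naive ``index-disjoint'' gluing jumps by $2$ across the interface between $\widetilde A\setminus\widetilde B$ and $\widetilde B\setminus\widetilde A$, so one enlarges first. Put $\widetilde A^{+}:=B(\widetilde A,D_i)$, choose $\pi(x)\in\widetilde A$ with $d(x,\pi(x))<D_i$ for $x\in\widetilde A^{+}$, and set $\xi^{\widetilde A^{+}}_x:=\xi^{\widetilde A}_{\pi(x)}$; since $d(\pi x,\pi y)<d(x,y)+2D_i$, for $d(x,y)\le\rho_i$ one gets $\|\xi^{\widetilde A^{+}}_x-\xi^{\widetilde A^{+}}_y\|_1\le\epsilon_{i+1}$, at the cost of $D_i$ in support radius; do the same for $B$. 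Each point of $U$ lies in $\widetilde A$ or $\widetilde B$, so with $t_A(x):=\max(0,1-\tfrac1{D_i}\dist(x,\widetilde A))$ and $t_B$ symmetric, $t_A+t_B\ge1$ on $U$; set $\lambda_A:=t_A/(t_A+t_B)$, $\lambda_B:=t_B/(t_A+t_B)$ (so $\lambda_A+\lambda_B=1$, $\operatorname{supp}\lambda_A\subseteq\widetilde A^{+}$, and $|\lambda_A(x)-\lambda_A(y)|+|\lambda_B(x)-\lambda_B(y)|\le C\,d(x,y)/D_i$ for an absolute $C$), and define
\[
\xi^{U}_x:=\lambda_A(x)\,\xi^{\widetilde A^{+}}_x+\lambda_B(x)\,\xi^{\widetilde B^{+}}_x .
\]
Then $\xi^U_x\in\Delta(X)$ is supported within $B(x,S_{i+1}+D_i)=B(x,S_i)$, and for $d(x,y)\le\rho_i$ the triangle inequality, splitting each summand into a ``transport'' term and a ``mass'' term, gives $\|\xi^U_x-\xi^U_y\|_1\le(\lambda_A(x)+\lambda_B(x))\,\epsilon_{i+1}+C\rho_i/D_i=\epsilon_{i+1}+C\rho_i/D_i=\epsilon_i$ (the transport estimate uses $\rho_i+2D_i=\rho_{i+1}<R_i$). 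This completes the induction.

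Since $\mathcal V_1=\{X\}$, we obtain $f:=\xi^{X}\colon X\to\Delta(X)$ with $\operatorname{supp} f_x\subseteq B(x,S_1)$, $S_1=S_0+\sum_{1\le k<n}D_k<\infty$, and $\|f_x-f_y\|_1\le\epsilon_1\le\epsilon$ for $d(x,y)\le R=\rho_1$; coboundedness of $f$ is exactly the uniform support bound, so $X$ has Property A. The one genuine obstacle is this bookkeeping: $n$ is not known until after the $R_i$ (hence the $D_i$) have been committed, so the $D_i$ must be chosen in advance to grow fast enough that $\sum_iC\rho_i/D_i\le\epsilon$ even though the scales $\rho_i$ at which variation must be controlled are themselves inflated by $2D_i$ per step — by exactly the enlargements needed to glue overlapping $R_i$-disjoint families — while the thresholds $R_i$ must stay above $\rho_{i+1}$ so that the inherited variation remains $\le\epsilon_{i+1}$; the resulting blow-up of the support radius to $S_1=S_0+\sum_{k<n}D_k$ is harmless, since for each fixed $R,\epsilon$ Property A only requires it to be finite.
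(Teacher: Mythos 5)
Your argument is correct, and it takes a genuinely different route from the paper. You prove the statement directly, in the Higson--Roe form of Property A used in \cite{DranZari} (probability measures $\xi_x$ supported in $B(x,S)$ with small $\ell^1$-variation at scale $R$): a downward induction over the decomposition, starting from Dirac measures on the uniformly bounded family $\mathcal V_n$, transporting measures onto $D_i$-collars of the $R_i$-disjoint pieces via a projection $\pi$ and blending with $1/D_i$-Lipschitz weights, and handling the key bookkeeping difficulty --- that the depth $n$ is unknown at the moment the scales $R_i$ are committed --- by fixing fast-growing $D_i$ in advance so that $\sum_i C\rho_i/D_i\le\epsilon$ independently of $n$, while the support radius $S_0+\sum_{k<n}D_k$ need only be finite; this is essentially the original Dranishnikov--Zarichnyi argument. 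The paper instead derives the statement as a special case of a stronger result: straight finite decomposition complexity trivially gives countable asymptotic dimension, and Corollary \ref{CountableAsDimImpliesPropA} (via Theorem \ref{MainResult}) shows countable asymptotic dimension implies Property A. That proof runs through Lipschitz extension of partitions of unity valued in $\Delta(S)$: the pasting Lemma \ref{BasicExtLemma} and Theorem \ref{NormalityTheorem} play the role of your cutoff gluing, the relative invariant $\lsdim(\cdot,X,\alpha,M)$ together with Lemmas \ref{BasicAsdimLemma} and \ref{BasicAsdimLemma2} replaces your inductive claim, the unknown depth is absorbed by iterating the extension-control function $E$ (the scales are chosen so that $\frac{2}{R_i+1}=E^{N(i)}(\epsilon)$) rather than by wide collars, and Property A is finally extracted through the large scale (weak) paracompactness results of \cite{CDV4}. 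Your approach is more elementary and self-contained but yields only the stated theorem; the paper's machinery buys the generalization to countable asymptotic dimension and the finite asymptotic dimension criterion of Theorem \ref{MainResult}. One point you pass over quickly: in the final estimate, if $\lambda_A(x)>0$ but $y\notin \widetilde A^{+}$ (so $\xi^{\widetilde A^{+}}_y$ is undefined), the $A$-summand cannot be split into transport plus mass terms; however then $\lambda_A(y)=0$ and that summand has norm exactly $\lambda_A(x)=|\lambda_A(x)-\lambda_A(y)|\le C\rho_i/D_i$, so the bound $\epsilon_{i+1}+C\rho_i/D_i$ survives --- this deserves a sentence in the write-up but is not a gap.
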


Our view is that straight finite decomposition complexity is a special case of countable asymptotic dimension (see \ref{CountableAsdim}). Namely, it corresponds to the fact that, in topology, one can define spaces of countable covering dimension as either countable unions of zero-dimensional spaces or as countable unions of spaces of finite dimension. Our main result, Theorem \ref{MainResult}, states that spaces $X$ of countable asymptotic dimension are actually of finite asymptotic dimension provided some finite skeleton of $\Delta(X)$ is a large scale absolute extensor of $X$. It generalizes \ref{DranZariResult} as well.

The author is grateful to the referee for valuable comments and suggestions that improved the exposition of the paper.

\section{Basic concepts}

In this section we recall basic concepts used in the paper.

\begin{Definition}
The \textbf{cardinality} of a set $S$ is denoted by $\card(S)$.
\end{Definition}

\begin{Definition}\label{R-disjointDef}
Given $R > 0$, a family $\{U_s\}_{s\in S}$ of subsets of a metric space $X$ is called $R$-\textbf{disjoint}
if $d(x,y) > R$ whenever $x\in U_s$, $y\in U_t$, and $s\ne t$.
\end{Definition}

\begin{Definition}\label{UniformlyBoundedDef}
A family $\{U_s\}_{s\in S}$ of subsets of a metric space $X$ is called \textbf{uniformly bounded}
if there is $M > 0$ such that diameters of all sets of the family are at most $M$.
\end{Definition}

\begin{Definition}\label{LebesgueNumberDef}
The \textbf{Lebesgue number} of a family $\{U_s\}_{s\in S}$ of subsets of a metric space $X$ is at least $M > 0$ if the family of $M$-balls $\{B(x,M)\}_{x\in X}$ refines $\{U_s\}_{s\in S}$.
\end{Definition}

\begin{Definition}\label{DeltaDef}
By $\Delta(S)$ we mean the subspace of $l_1(S)$ ($S$ is the set of vertices of the simplicial complex $\Delta(S)$) consisting of non-negative functions $f: S\to [0,1]$ of finite support
such that $\sum\limits_{v\in S} f(v)=1$. The {\bf star} $st(v)$ of vertex $v$ consists of all $f\in \Delta(S)$
such that $f(v) > 0$.

By $\Delta(S)^{(n)}$ we mean the \textbf{$n$-skeleton} of $\Delta(S)$.
\end{Definition}

\begin{Definition}\label{PUDef}
A (point-finite) \textbf{partition of unity} on a set $X$ is a function $f:X\to \Delta(S)$ for some $S$.
$f$ is a \textbf{barycentric partition of unity} if $f(x)(v)=f(x)(w)$ whenever $f(x)(v) > 0$ and $f(x)(w) > 0$.

The \textbf{Lebesgue number} of $f$ is synonymous with the Lebesgue number of
$\{f^{-1}(st(v))\}_{v\in S}$.
\end{Definition}

\begin{Definition}\label{CoboundedDef}
Suppose $X$ is a metric space.
A partition of unity $f:X\to \Delta(S)$ is $M$-\textbf{cobounded} if
$\diam(f^{-1}(st(v)))\leq M$ for all $v\in S$.

$f$ is \textbf{cobounded} if it is $M$-cobounded for some $M > 0$.
\end{Definition}

\begin{Definition}\label{LipschitzFunctionsDef}
A function $f:X\to Y$ is $(\lambda,C)$-Lipschitz if $d_Y(f(x),f(y))\leq \lambda\cdot d_X(x,y)+C$
for all $x,y\in X$.
\end{Definition}

\begin{Lemma}\label{BasicLipLemma}
Suppose $f:X\to \Delta(S)$ is a partition of unity and $\epsilon \ge \frac{2}{R+1}$ for some $R > 0$.
If $d(x,y) < R$ implies $d(f(x),f(y))\leq \epsilon\cdot d(x,y)+\epsilon$, then $f$ is $(\epsilon,\epsilon)$-Lipschitz.
\end{Lemma}
\begin{proof}
If $d(x,y)\ge R$, then $ \epsilon\cdot d(x,y)+\epsilon\ge \epsilon\cdot(R+1)\ge 2\ge d(f(x),f(y))$.
\end{proof}

For basic facts related to the coarse category see \cite{Roe lectures}.

\section{Large scale weak paracompactness}

A dualization of weak paracompactness was developed in \cite{CDV4} via coarsening of covers.
Using $R$-disjointness one is led to a different concept and we do not know if it is equivalent to large scale weak paracompactness (see Problems \ref{RDisjointnessProblem} and \ref{WeakParaEquivalenceProblem}).

\begin{Definition}[\cite{CDV3},  \cite{CDV4}]\label{LSWeakParaDef}
$X$ is \textbf{large scale weakly paracompact} if for each $r,s > 0$ there is a uniformly bounded cover $\mathcal{U}$ of $X$ of Lebesgue number at least $s$ such that every $r$-ball $B(x,r)$ is contained in only finitely many elements of $\mathcal{U}$.
\end{Definition}

\begin{Proposition}[\cite{CDV4}]\label{CharacterisationLSWP}
The following conditions are equivalent for each metric space $X$:\\
a. For each $r > 0$ there is a uniformly bounded cover $\mathcal{U}$ of $X$ such that every $r$-ball $B(x,r)$ intersects only finitely many elements of $\mathcal{U}$.\\
b. $X$ is large scale weakly paracompact.\\
c. For every uniformly bounded cover $\mathcal U$ of $X$ there exists uniformly bounded point-finite cover $\mathcal V$ such that $\mathcal U$ is refinement of $\mathcal V$.
\end{Proposition}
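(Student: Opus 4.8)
The plan is to prove the cycle of implications (a) $\Rightarrow$ (c) $\Rightarrow$ (b) $\Rightarrow$ (a), since (b) $\Rightarrow$ (a) is essentially immediate from the definitions. For (b) $\Rightarrow$ (a): if $X$ is large scale weakly paracompact, then given $r$ we apply Definition \ref{LSWeakParaDef} with this $r$ and, say, $s = 1$; the resulting cover $\mathcal{U}$ is uniformly bounded, and since every $r$-ball lies in only finitely many members of $\mathcal{U}$, it certainly meets only finitely many of them once we shrink $r$ slightly (or note that ``contained in'' and ``intersects'' can be interchanged at the cost of enlarging $r$ by the uniform bound $M$, because a set of diameter $\le M$ meeting $B(x,r)$ is contained in $B(x, r+M)$). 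So this step is a short bookkeeping argument with the radii.

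The substantive work is (a) $\Rightarrow$ (c). Given a uniformly bounded cover $\mathcal{U}$, say with $\diam(U) \le D$ for all $U \in \mathcal{U}$, set $r := D$ (or $r := D+1$) and apply (a) to get a uniformly bounded cover $\mathcal{W} = \{W_j\}$, $\diam(W_j) \le M$, such that each $r$-ball meets only finitely many $W_j$. Now I would build $\mathcal{V}$ by enlarging the elements of $\mathcal{W}$ just enough to swallow the elements of $\mathcal{U}$ they meet: for each $j$ define
\[
V_j := \bigcup \{\, U \in \mathcal{U} : U \cap W_j \ne \emptyset \,\}.
\]
Then $\mathcal{V} = \{V_j\}$ refines... wait, the containment goes the right way: each $U \in \mathcal{U}$ meets some $W_j$ (as $\mathcal{W}$ is a cover), hence $U \subseteq V_j$, so $\mathcal{U}$ refines $\mathcal{V}$. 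Each $V_j$ has diameter at most $M + 2D$, so $\mathcal{V}$ is uniformly bounded. Finally, $\mathcal{V}$ is point-finite: a point $x$ lies in $V_j$ only if some $U \in \mathcal{U}$ with $x \in U$ meets $W_j$; since $\diam(U) \le D$ we get $U \subseteq B(x, D) \subseteq B(x, r)$, so $W_j$ meets $B(x,r)$, and only finitely many $j$ have this property. This gives (c).

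For (c) $\Rightarrow$ (b), fix $r, s > 0$. Start from any uniformly bounded cover of $X$ of Lebesgue number at least $s$ — for instance the cover by all balls $B(x, s)$, which has Lebesgue number $\ge s$ and is uniformly bounded by $2s$. Apply (c) to obtain a uniformly bounded point-finite cover $\mathcal{V}$ coarsening it; since $\mathcal{V}$ is coarser it still has Lebesgue number at least $s$. It remains to upgrade point-finiteness (each \emph{point} lies in finitely many members) to the condition that each $r$-ball lies in finitely many members. This is the one genuinely delicate point of the argument, and it is where I expect the main obstacle: point-finiteness alone does not obviously control $r$-balls. I would handle it by a further coarsening step — replace each $V \in \mathcal{V}$ by the union of all members of $\mathcal{V}$ within distance $r$ of it, or pass through statement (a) again via the equivalence being proved — and then verify that an $r$-ball contained in the enlarged set forces the original point to be constrained to finitely many indices. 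The radii and uniform bounds must be chosen so that this enlargement keeps the family uniformly bounded while preserving the Lebesgue number; tracking these constants carefully is the crux of the proof.
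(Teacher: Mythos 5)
The paper only cites this proposition from [CDV4] and gives no proof, so your argument stands on its own; of the three implications you attempt, (a)$\Rightarrow$(c) is correct (the coarsening $V_j=\bigcup\{U\in\mathcal U: U\cap W_j\ne\emptyset\}$ with $r>D$ works exactly as you say), but the other two steps have problems, one of them genuine. In (b)$\Rightarrow$(a) your claim that ``contained in'' and ``intersects'' can be interchanged by enlarging $r$ by the uniform bound $M$ fails: the inclusions run in opposite directions. Condition (b) bounds the number of members $U$ with $B(x,r)\subseteq U$, whereas (a) asks about members that merely meet $B(x,r)$; a member can meet $B(x,r)$ --- even contain $x$ --- while containing no ball of any prescribed radius, so the finiteness hypothesis of (b) says nothing about such members, and neither ``shrinking $r$ slightly'' nor the observation $U\subseteq B(x,r+M)$ helps. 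The missing ingredient is the Lebesgue-number parameter $s$, which you set to $1$ and never use. A correct argument: apply (b) with $s=2r$, and replace each $U$ by its core $U'=\{y\in X: B(y,2r)\subseteq U\}$. The Lebesgue number $\ge 2r$ guarantees $\{U'\}_{U\in\mathcal U}$ is still a cover, it is uniformly bounded since $U'\subseteq U$, and if $y\in B(x,r)\cap U'$ then $B(x,r)\subseteq B(y,2r)\subseteq U$, so by (b) only finitely many $U$ (hence finitely many $U'$) meet $B(x,r)$.

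In (c)$\Rightarrow$(b) the situation is the reverse: the step you single out as ``the one genuinely delicate point'' is in fact immediate, and your proposed patches are either unnecessary or circular (invoking (a) through the very equivalence being proved). You appear to have read (b) as requiring each $r$-ball to meet only finitely many members; it requires each $r$-ball to be \emph{contained in} only finitely many members, which is weaker than point-finiteness, not stronger: if $B(x,r)\subseteq V$ then $x\in V$ (since $r>0$), so point-finiteness of $\mathcal V$ at $x$ already bounds the number of such $V$, for every $r$ simultaneously. Thus your (c)$\Rightarrow$(b) is complete once you note this one line: start with the cover $\{B(x,s)\}_{x\in X}$, coarsen it by (c) to a uniformly bounded point-finite $\mathcal V$; since every $s$-ball lies in a member of $\mathcal V$, the Lebesgue number of $\mathcal V$ is at least $s$, and point-finiteness gives the containment-finiteness condition. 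In short: fix (b)$\Rightarrow$(a) with the core/Lebesgue-number trick above, and delete the speculative coarsening at the end of (c)$\Rightarrow$(b).
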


The following is a partial answer to Problem \ref{RDisjointnessProblem}:

\begin{Proposition}
If for every $r > 0$ there is a uniformly bounded cover $\mathcal{U} $ of $X$ that can be written as the union
$\bigcup\limits_{i=1}^{\infty} \mathcal{U} _i$ of $r$-disjoint families $\mathcal{U} _i$, then $X$ is large scale weakly paracompact.
\end{Proposition}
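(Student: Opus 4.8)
The plan is to apply the characterization in Proposition \ref{CharacterisationLSWP}, specifically condition (a): it suffices to show that for each $r > 0$ there is a uniformly bounded cover of $X$ such that every $r$-ball meets only finitely many of its elements. So fix $r > 0$. By hypothesis, applied with this $r$, there is a uniformly bounded cover $\mathcal{U} = \bigcup_{i=1}^\infty \mathcal{U}_i$ where each $\mathcal{U}_i$ is $r$-disjoint. The natural move is to collapse each $r$-disjoint family $\mathcal{U}_i$ to a single set $V_i := \bigcup \mathcal{U}_i$, obtaining a countable cover $\mathcal{V} = \{V_i\}_{i=1}^\infty$ of $X$.

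The key observation is that an $r$-ball $B(x,r)$ can intersect $V_i$ in at most one of the sets of $\mathcal{U}_i$: if $B(x,r)$ met two distinct members $U_s, U_t \in \mathcal{U}_i$, we would have points $y \in B(x,r) \cap U_s$ and $z \in B(x,r) \cap U_t$ with $d(y,z) \le d(y,x) + d(x,z) < 2r$, which need not contradict $r$-disjointness directly — so I would instead work with an $\tfrac{r}{2}$-ball, or, cleaner, run the hypothesis with parameter $2r$ in place of $r$. That is: given $r > 0$, apply the hypothesis to $2r$ to get $\mathcal{U} = \bigcup_i \mathcal{U}_i$ with each $\mathcal{U}_i$ being $2r$-disjoint and $\operatorname{diam}(U) \le M$ for all $U \in \mathcal{U}$. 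Then if $B(x,r)$ intersects two distinct $U_s, U_t \in \mathcal{U}_i$, picking $y,z$ as above gives $d(y,z) < 2r$, contradicting $2r$-disjointness. Hence $B(x,r) \cap V_i \subseteq U_{s(i)}$ for a single index, so $\operatorname{diam}(B(x,r) \cap V_i) \le M$, and in particular $V_i$ is within distance $M + 2r$ of $x$ whenever it meets $B(x,r)$.

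Now I need to promote "each $V_i$ meeting $B(x,r)$ does so inside a bounded neighborhood of $x$" to "only finitely many $V_i$ meet $B(x,r)$" — which is false in general, since infinitely many of the $V_i$ could all pass near $x$. So the collapsed cover $\mathcal{V}$ need not itself witness (a); instead I would use $\mathcal{V}$ together with the uniform bound to produce a genuinely uniformly bounded cover. The fix: let $W$ be the union of all $V_i$ (equivalently, all of $X$), which is useless; better, for each point $x$ form $\widehat{V}_x := B(x, M + 2r) $ — but that is just the ball cover, not uniformly bounded in a useful refinement sense. The honest route is that $\mathcal{V} = \{V_i\}$ is a \emph{countable} uniformly bounded cover only if the $V_i$ themselves are uniformly bounded, which they are not.

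I expect the main obstacle to be exactly this gap, and I believe the intended resolution is to observe that what we have actually shown is: for each $r$, $X$ admits a uniformly bounded cover $\mathcal{U}$ (namely the original $\mathcal{U}$ from the hypothesis at parameter $2r$) with the property that every $r$-ball meets at most one element from each $\mathcal{U}_i$. This does not immediately give "finitely many," so one must invoke the indexing more carefully — perhaps by passing to a subcover or by noting that the hypothesis gives, for the specific radius needed in Proposition \ref{CharacterisationLSWP}(a), a cover where the "layers" $\mathcal{U}_i$ can be amalgamated in groups. A cleaner approach that avoids the obstruction entirely: apply Proposition \ref{CharacterisationLSWP}(c) instead — we already have a uniformly bounded cover $\mathcal{U}$, and we want a uniformly bounded point-finite cover refining... no, (c) goes the wrong direction. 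So I would ultimately argue via (a) with the amalgamated cover $\{V_i\}$ after all, restricting attention to the trace on a fixed ball: since $B(x,r) \cap V_i$ lies in a single $M$-bounded piece, and the original cover $\mathcal{U}$ is point-finite-free but the \emph{collection} $\{B(x,r)\cap U : U \in \mathcal{U}\}$ of nonempty traces — here is where I would need the hypothesis to additionally control how many layers are nonempty near $x$, which is precisely the content I must extract. The resolution, I suspect, is that the problem statement implicitly intends $\mathcal{U}_i = \emptyset$ for all but finitely many $i$ near any bounded region, or that one simply takes $\mathcal{V} = \{\bigcup\mathcal{U}_i\}_{i \ge 1} \cup \{X\}$ and checks large scale weak paracompactness via a diagonal/telescoping argument; I would present the layer-amalgamation computation and flag the finiteness step as the crux.
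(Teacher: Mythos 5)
Your proposal stalls exactly where you say it does, and that gap is genuine: amalgamating each $r$-disjoint layer into $V_i=\bigcup\mathcal{U}_i$ and aiming at condition (a) of Proposition \ref{CharacterisationLSWP} cannot work as stated, because the $V_i$ are not uniformly bounded and nothing in the hypothesis prevents infinitely many distinct layers from having elements near a fixed ball. Your suggested escapes (that all but finitely many $\mathcal{U}_i$ are empty near a bounded region, or that the statement implicitly assumes this) are not available -- the hypothesis allows, say, countably many layers each contributing a set touching a given point's neighborhood. The missing idea is not a finiteness hypothesis but a \emph{priority (peeling) construction} on the layers. Apply the hypothesis at scale $2s$, so each $\mathcal{U}_k$ is $2s$-disjoint, and for $U\in\mathcal{U}_k$ set $U'=U\setminus\bigcup_{i<k}\bigl\{B(V,s):V\in\mathcal{U}_i\bigr\}$ and $U^\ast=B(U',s)$. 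Then $\{U^\ast\}$ is uniformly bounded and has Lebesgue number at least $s$, and it is \emph{point-finite}: if $x\in U$ for some $U\in\mathcal{U}_m$, then for every $V\in\mathcal{U}_i$ with $i>m$ the ball $B(x,s)$ has been removed from $V'$, so $x\notin V^\ast$; and within each fixed layer $k\le m$ the sets $V^\ast\subseteq B(V,s)$ are pairwise disjoint by $2s$-disjointness, so at most one contains $x$. Hence $x$ lies in at most $m$ of the sets $U^\ast$.

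So the correct target is point-finiteness of a suitably modified cover (condition (c) of Proposition \ref{CharacterisationLSWP}, or the definition directly, since a point-finite cover of Lebesgue number $\ge s$ has the property that any ball is contained in only finitely many of its elements), not condition (a) applied to the collapsed sets $V_i$. Your computation that a ball meets at most one member of each $2r$-disjoint family is a correct ingredient and reappears in the per-layer disjointness step above, but without the shaving by earlier layers and re-thickening by $s$ it cannot control the number of layers, which is precisely the step you flagged and left open.
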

\begin{proof} Suppose $s > 0$.
Pick a uniformly bounded cover $\mathcal{U} $ of $X$ that can be written as the union
$\bigcup\limits_{i=1}^{\infty} \mathcal{U} _i$ of $2s$-disjoint families $\mathcal{U} _i$

Given $U\in \mathcal{U} _k$ define $U'=U\setminus \bigcup\limits_{i < k} \{B(V,s) | V\in \mathcal{U} _i\}$
and $U^\ast =B(U',s)$.
Since $\{U'\}_{U\in \mathcal{U}}$ is a uniformly bounded cover of $X$, $\{U^\ast\}_{U\in \mathcal{U}}$ is of Lebesgue number at least $s$ and is uniformly bounded. Given $x\in X$ choose $m\ge 1$ so that $x\in U$
for some $U\in \mathcal{U}_m$. Therefore $B(x,s)\cap V'=\emptyset$ and $x\notin V^\ast$ for all $V\in \mathcal{U}_i$, $i > m$. If we fix $k\leq m$, then there is at most one $V\in \mathcal{U}_k$ such that $x\in V^\ast$.
Thus $\{U^\ast\}_{U\in \mathcal{U}}$  is a point-finite cover of $X$. By c) of \ref{CharacterisationLSWP}, $X$ is large scale weakly paracompact.
\end{proof}

\begin{Corollary} [\cite{CDV4}]
If $X$ is separable at some scale $r > 0$ (that means there is a countable subset $S$
of $X$ with $\bigcup\limits_{x\in S}B(x,r)=X$), then $X$ is large scale weakly paracompact.
\end{Corollary}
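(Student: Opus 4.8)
The plan is to deduce the Corollary directly from the preceding Proposition, so no new machinery is needed. Fix an arbitrary $\rho > 0$ (this will play the role of the parameter "$r$" in the statement of that Proposition); I must produce a uniformly bounded cover of $X$ that is a countable union of $\rho$-disjoint families. Let $S=\{x_n : n\in\NN\}$ be a countable subset of $X$ witnessing separability at scale $r$, i.e. $\bigcup_{n\in\NN} B(x_n,r)=X$. I would then set $\mathcal{U}_n:=\{B(x_n,r)\}$ for each $n\in\NN$ and $\mathcal{U}:=\bigcup_{n\in\NN}\mathcal{U}_n=\{B(x_n,r)\}_{n\in\NN}$.

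It remains to check the three required properties. The family $\mathcal{U}$ covers $X$ by the choice of $S$, and it is uniformly bounded because every element has diameter at most $2r$. Finally, each $\mathcal{U}_n$ consists of a single set, so the defining condition of $\rho$-disjointness from \ref{R-disjointDef} — that $d(x,y)>\rho$ for points lying in sets indexed by two \emph{distinct} indices — is vacuously satisfied; hence $\mathcal{U}_n$ is $\rho$-disjoint (indeed $R$-disjoint for every $R>0$). Since $\rho>0$ was arbitrary, the hypothesis of the previous Proposition holds, and therefore $X$ is large scale weakly paracompact.

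The one point worth flagging is that one should \emph{not} attempt to verify condition a) of Proposition \ref{CharacterisationLSWP} for the cover $\{B(x_n,r)\}_{n\in\NN}$ directly: if the points $x_n$ accumulate, a single $r$-ball can meet infinitely many of the balls $B(x_n,r)$, so this cover is generally far from being point-finite. The passage to a point-finite cover is exactly the content of the proof of the preceding Proposition, which thickens and then trims the families in the order supplied by the index $n$ (here the singleton families, already linearly indexed by $\NN$). Routing the argument through that Proposition is thus what makes it work, and beyond the elementary observation that a one-element family is $R$-disjoint for every $R$ there is no real obstacle.
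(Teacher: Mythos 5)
Your proof is correct and is essentially the paper's own argument: the paper likewise takes the cover $\{B(x,r)\}_{x\in S}$, notes it is uniformly bounded and a countable union of ($\infty$-)disjoint singleton families, and invokes the preceding Proposition. Your extra remark about not checking point-finiteness directly is accurate but not needed.
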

\begin{proof}
The family $\{B(x,r)\}_{x\in S}$ is uniformly bounded and is the union of countably many $\infty$-disjoint families.
\end{proof}

\begin{Problem}\label{WeakParaEquivalenceProblem}
Suppose $X$ is large scale weakly paracompact and $r > 0$. Is there a uniformly bounded cover $\mathcal{U} $ of $X$ that can be written as the union
$\bigcup\limits_{i=1}^{\infty} \mathcal{U} _i$ of $r$-disjoint families $\mathcal{U} _i$?
\end{Problem}

\begin{Definition}[\cite{CDV3}]\label{LSFinitisticDef}
A metric space $X$ is \textbf{large scale finitistic} if for every $r > 0$
there is a uniformly bounded cover $\mathcal{U}$ of $X$ whose Lebesgue number
 is at least $r$ and there is $n(\mathcal{U})\in \mathbb{N}$ such that each $x\in X$ belongs to at most $n(\mathcal{U})$ elements of $\mathcal{U}$.
\end{Definition}

\begin{Problem}
Suppose $X$ is large scale finitistic and $r > 0$. Is there a uniformly bounded cover $\mathcal{U} $ of $X$ that can be written as the union
$\bigcup\limits_{i=1}^{m} \mathcal{U} _i$ of finitely many $r$-disjoint families $\mathcal{U} _i$?
\end{Problem}

\section{Pasting partitions of unity}

This section contains the main technical tool of the paper: pasting partitions of unity
so that the resulting partition of unity is $(\epsilon,\epsilon)$-Lipschitz and $K$-cobounded. Given a partition of unity $f:A\to \Delta(S)$, by the \textbf{carrier} of $f$ we mean the minimal subcomplex of $\Delta(S)$ containing $f(A)$.

\begin{Lemma}\label{BasicExtLemma}
Suppose the following is given:

\begin{itemize}
\item[a.] $A$ is a subset of a metric space $X$, 
\item[b.] $f:A\to \Delta(S)$ is a $(\delta,\delta)$-Lipschitz partition of unity on $A$ for some $\delta > 0$, 
\item[c.] $g:X\to \Delta(S)$ is a $(\delta,\delta)$-Lipschitz partition of unity on $X$, 
\item[d.] $p:X\to A$ is a retraction such that $d(x,p(x)) < dist(x,A)+1$ for all $x\in A$,
\item[e.] $\alpha:X\to [0,1]$ is $\frac{1}{r}$-Lipschitz, $\alpha(A)\subset \{0\}$, and $\alpha(X\setminus B(A,r))\subset \{1\}$,
\item[f.] $h:X\to \Delta(S)$ is defined as $h(x)=\alpha(x)\cdot g(x)+(1-\alpha(x))\cdot f(p(x))$.
\end{itemize}
In order for $h$ to be $(\epsilon,\epsilon)$-Lipschitz it suffices that $r\ge \frac{4}{\epsilon}$, $\delta\leq \frac{\epsilon}{3}-\frac{2}{3r}$, and $\delta\leq \frac{\epsilon}{4r+7}$.

If, in addition, the carriers of $f(A)$ and $g(X)$ are disjoint and both $f$ and $g$ are $M$-cobounded, then  $h$ is $(M+2r+2)$-cobounded.
\end{Lemma}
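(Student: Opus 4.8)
The plan is to check first that $h$ is a genuine partition of unity on $X$ — for each $x$ the vector $h(x)$ is non‑negative, finitely supported, and $\sum_{v\in S}h(x)(v)=\alpha(x)+(1-\alpha(x))=1$ — and then to treat the Lipschitz estimate and the coboundedness claim separately.

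\emph{The Lipschitz estimate.} By Lemma \ref{BasicLipLemma} it suffices to bound $d(h(x),h(y))$ by $\epsilon\cdot d(x,y)+\epsilon$ only when $d(x,y)<R$ for one value $R$ with $\epsilon\ge\frac{2}{R+1}$; since $r\ge\frac4\epsilon$ one may take $R=r+1$ (then $\epsilon(r+2)\ge 4>2$). The starting point is the algebraic identity
\[
h(x)-h(y)=\alpha(x)\bigl(g(x)-g(y)\bigr)+\bigl(1-\alpha(x)\bigr)\bigl(f(p(x))-f(p(y))\bigr)+\bigl(\alpha(x)-\alpha(y)\bigr)\bigl(g(y)-f(p(y))\bigr),
\]
which in $l_1(S)$ gives
\[
d(h(x),h(y))\le\alpha(x)\,d(g(x),g(y))+(1-\alpha(x))\,d(f(p(x)),f(p(y)))+|\alpha(x)-\alpha(y)|\cdot d(g(y),f(p(y))).
\]
The first summand is $\le\delta\,d(x,y)+\delta$ by (c); the third is $\le\frac2r\,d(x,y)$ by (e) together with the fact that any two points of $\Delta(S)$ are at $l_1$‑distance at most $2$. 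For the middle summand one uses the crucial observation that its coefficient $1-\alpha(x)$ is nonzero only when $x\in B(A,r)$, i.e. $\dist(x,A)<r$; combined with $d(x,y)<r+1$ this gives $\dist(y,A)<2r+1$, and (d) then bounds $d(p(x),p(y))\le d(p(x),x)+d(x,y)+d(y,p(y))<(r+1)+(r+1)+(2r+2)=4r+4$, so by (b) the middle summand is $<\delta(4r+5)$. Summing, $d(h(x),h(y))<\bigl(\delta+\frac2r\bigr)d(x,y)+\delta(4r+6)$. The hypothesis $\delta\le\frac\epsilon3-\frac{2}{3r}$ (using $\frac1r\le\frac\epsilon4$) yields $\delta+\frac2r\le\epsilon$, and $\delta\le\frac{\epsilon}{4r+7}$ yields $\delta(4r+6)\le\epsilon$, so $d(h(x),h(y))\le\epsilon\,d(x,y)+\epsilon$ and Lemma \ref{BasicLipLemma} applies.

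\emph{Coboundedness.} Assume in addition that the carriers of $f(A)$ and $g(X)$ are disjoint and that $f,g$ are $M$‑cobounded. Then every vertex $v\in S$ lies in the vertex set of at most one of the two carriers. If $v$ belongs to the carrier of $g(X)$, then $f(p(x))(v)=0$ for all $x$, so $h^{-1}(st(v))\subseteq g^{-1}(st(v))$ and $\diam(h^{-1}(st(v)))\le M$. If $v$ belongs to the carrier of $f(A)$, then $g(x)(v)=0$ for all $x$, so $h(x)(v)>0$ forces both $\alpha(x)<1$ — hence $\dist(x,A)<r$ and $d(x,p(x))<r+1$ — and $f(p(x))(v)>0$ — hence $p(x)\in f^{-1}(st(v))$; consequently any $x,y\in h^{-1}(st(v))$ satisfy $d(x,y)\le d(x,p(x))+d(p(x),p(y))+d(p(y),y)<(r+1)+M+(r+1)$. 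In the remaining case $h^{-1}(st(v))=\emptyset$. Hence $h$ is $(M+2r+2)$‑cobounded.

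\emph{Main obstacle.} The only delicate point is the middle term of the Lipschitz inequality: a priori $d(p(x),p(y))$, and therefore $d(f(p(x)),f(p(y)))$, is completely uncontrolled, and the estimate works only because that term carries the factor $1-\alpha(x)$, which is supported in $B(A,r)$, where condition (d) ties $p(x)$ to $x$ and $p(y)$ to $y$. Everything else is routine bookkeeping, and the three numerical hypotheses on $r$ and $\delta$ are exactly what is needed for the two resulting inequalities.
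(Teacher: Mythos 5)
Your proof is correct; both the Lipschitz estimate and the coboundedness bound are fully justified, and the numerical hypotheses are used exactly where needed. Your route differs from the paper's in organization: the paper expands $h(x)-h(y)$ into four terms, namely $(\alpha(x)-\alpha(y))g(x)+\alpha(y)(g(x)-g(y))+[f(p(x))-f(p(y))]-[\alpha(x)f(p(x))-\alpha(y)f(p(y))]$, and then runs a three-case analysis according to whether $x$ and $y$ lie in $B(A,r)$, estimating all pairs $x,y$ directly (it does not invoke Lemma \ref{BasicLipLemma} in this proof; the key bound $d(p(x),p(y))\leq d(x,y)+2r+2$ comes from both points lying in $B(A,r)$ in the critical case). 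You instead use the convex-combination identity with the factor $1-\alpha(x)$, which makes the dangerous term vanish off $B(A,r)$, and you buy control of $d(y,p(y))$ not from $y\in B(A,r)$ but from the reduction to $d(x,y)<r+1$ via Lemma \ref{BasicLipLemma}; this collapses the case analysis to a single observation and yields slightly sharper constants ($\delta+\frac{2}{r}$ and $(4r+6)\delta$ versus the paper's $\frac{2}{r}+3\delta$ and $(4r+7)\delta$), comfortably within the stated hypotheses on $\delta$ and $r$. The coboundedness argument is essentially identical to the paper's.
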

\begin{proof}
Notice $h$ is an extension of $f$.

We need to show $|h(x)-h(y)|\leq \epsilon\cdot d(x,y)+\epsilon$ for $x,y\in X$.
Notice
$h(x)-h(y)=\alpha(x)\cdot g(x)+(1-\alpha(x))\cdot f(p(x)) -[\alpha(y)\cdot g(y)+(1-\alpha(x))\cdot f(p(y))]=
(\alpha(x)-\alpha(y))\cdot g(x)+ \alpha(y)\cdot (g(x)-g(y))+[f(p(x))- f(p(y))]-[\alpha(x)\cdot f(p(x))-\alpha(y)\cdot f(p(y))]$.

The terms $(\alpha(x)-\alpha(y))\cdot g(x)$ and $\alpha(y)\cdot (g(x)-g(y))$ have universal estimates
$|(\alpha(x)-\alpha(y))\cdot g(x)|\leq |\alpha(x)-\alpha(y)|\leq \frac{1}{r}\cdot d(x,y)$
and $|\alpha(y)\cdot (g(x)-g(y))|\leq |g(x)-g(y)|\leq \delta\cdot d(x,y)+\delta$, so we need to estimate the remaining terms
depending of where $x$ and $y$ belong.

\textbf{Case 1}: $x\notin B(A,r)$ and $y\in B(A,r)$.\\
Here $\alpha(x)=1$, so $[f(p(x))- f(p(y))]-[\alpha(x)\cdot f(p(x))-\alpha(y)\cdot f(p(y))]=
(\alpha(y)-\alpha(x))\cdot f(p(y))$ and this term is at most $\frac{1}{r}\cdot d(x,y)$. Thus, in that case,
we have $|h(x)-h(y)|\leq (\frac{2}{r}+\delta)\cdot d(x,y)+\delta\leq \epsilon\cdot d(x,y)+\epsilon$.

\textbf{Case 2}: $x\in B(A,r)$ and $y\in B(A,r)$.\\
We know 
$|f(p(x))-f(p(y))|\leq \delta\cdot d(p(x),p(y))+\delta$. Notice $d(p(x),p(y))\leq d(p(x),x)+d(x,y)+d(y,p(y))\leq
dist(x,A)+1+d(x,y)+d(y,A)+1\leq 2r+2+d(x,y)$. Also, $\alpha(x)\cdot f(p(x))-\alpha(y)\cdot f(p(y))=
\alpha(x)\cdot (f(p(x))-f(p(y)))+(\alpha(x)-\alpha(y))\cdot f(p(y))$
resulting in $ |\alpha(x)\cdot f(p(x))-\alpha(y)\cdot f(p(y))|\leq |f(p(x))-f(p(y))|+|\alpha(x)-\alpha(y)|\leq
\delta\cdot (2r+2+d(x,y))+\delta+\frac{1}{r}\cdot d(x,y)$.

The final outcome is
$$|h(x)-h(y)|\leq $$
$$\frac{1}{r}\cdot d(x,y)+\delta\cdot d(x,y)+\delta+\delta\cdot (2r+2+d(x,y))+\delta+\delta\cdot (2r+2+d(x,y))+\delta+\frac{1}{r}\cdot d(x,y)=$$
$$(\frac{2}{r}+3\delta)\cdot d(x,y)+4r\delta+7\delta$$
To achieve $|h(x)-h(y)|\leq 
\epsilon\cdot d(x,y)+\epsilon$ it suffices $ \frac{2}{r}+3\delta\leq \epsilon$ and $ 4r\delta+7\delta\leq \epsilon$.
That amounts to $\delta\leq \frac{\epsilon}{3}-\frac{2}{3r}$ and $\delta\leq \frac{\epsilon}{4r+7}$.

\textbf{Case 3}: $x\notin B(A,r)$ and $y\notin B(A,r)$.\\
In that case $h(x)=g(x)$ and $h(y)=g(y)$, so $|h(x)-h(y)|\leq \delta\cdot d(x,y)+\delta\leq
\epsilon\cdot d(x,y)+\epsilon$.

Suppose the carriers of $f(A)$ and $g(X)$ are disjoint and there is $M > 0$ such that
$\diam(f^{-1}(st(v))), \diam(g^{-1}(st(v)))\leq M$ for all $v\in S$.

If $v\in S$ belongs to the carrier of $g(X)$ and $h(x)(v) > 0$, then $x$ must belong to $g^{-1}(st(v))$.
Thus, $\diam(h^{-1}(st(v)))\leq M$ in that case.
If $v\in S$ belongs to the carrier of $f(A)$ and $h(x)(v) > 0$, then $x\in B(A,r)$ and $p(x)\in f^{-1}(st(v))$.
Since $d(x,p(x))\leq r+1$, $\dist(x,f^{-1}(st(v))\leq r+1$ and  $\diam(h^{-1}(st(v)))\leq M+2r+2$.

\end{proof}

\section{Coarse normality}

In this section we dualize one part of Theorem \ref{Michael-NagamiThm}.

It is shown in \cite{DydExt} (see Theorem 9.1(5)) that a topological space $X$ is collectionwise normal if and only if partitions of unity on each closed subset $A$ of $X$ extends over $X$.
In other words, certain spaces are absolute extensors of $X$. \cite{DydakMitra lsAE} is devoted to dualizing the concept of absolute extensors to the coarse category. 

The following result may be seen as stating that every metric space $X$ is large scale
collectionwise normal.

\begin{Theorem}\label{NormalityTheorem}
For every $\epsilon > 0$ there is $\delta > 0$ such that any $(\delta,\delta)$-Lipschitz partition of unity $f:A\to \Delta(S)$, $A$ a subset of a metric space $X$,
extends to an $(\epsilon,\epsilon)$-Lipschitz partition of unity $g:X\to \Delta(S)$.
\end{Theorem}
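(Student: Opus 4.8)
The plan is to deduce this from Lemma \ref{BasicExtLemma}, which already carries the entire delicate Lipschitz estimate; what remains is to manufacture the auxiliary data that lemma requires and, crucially, to feed it a harmless partition of unity $g$ on all of $X$. The key observation is that one always exists: assuming $S\ne\emptyset$ (if $S=\emptyset$ then $\Delta(S)=\emptyset$ and the statement is degenerate), pick a vertex $v_0\in S$ and let $g_0\colon X\to\Delta(S)$ be the constant map at the Dirac function at $v_0$. Being constant, $g_0$ is $(0,0)$-Lipschitz, hence $(\delta,\delta)$-Lipschitz for every $\delta>0$, so it fits hypothesis (c) of Lemma \ref{BasicExtLemma} no matter which $\delta$ we end up using.

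First I would pin down the constants. Given $\epsilon>0$ I would set $r:=4/\epsilon$, so that $\tfrac{\epsilon}{3}-\tfrac{2}{3r}=\tfrac{\epsilon}{6}>0$, and then put
\[ \delta:=\min\Bigl\{\tfrac{\epsilon}{6},\ \tfrac{\epsilon}{4r+7}\Bigr\}>0 . \]
By construction $r\ge 4/\epsilon$, $\delta\le\tfrac{\epsilon}{3}-\tfrac{2}{3r}$ and $\delta\le\tfrac{\epsilon}{4r+7}$, which are precisely the three inequalities Lemma \ref{BasicExtLemma} asks for, and this $\delta$ depends only on $\epsilon$, as required.

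Next, given an arbitrary $(\delta,\delta)$-Lipschitz partition of unity $f\colon A\to\Delta(S)$ (we may assume $A\ne\emptyset$, otherwise $g:=g_0$ already works), I would build the retraction and the cutoff. For $p\colon X\to A$, set $p(x)=x$ on $A$ and, for $x\notin A$, use the axiom of choice together with the definition of $\dist(x,A)$ to select $p(x)\in A$ with $d(x,p(x))<\dist(x,A)+1$; this is hypothesis (d). For $\alpha\colon X\to[0,1]$, take $\alpha(x)=\min\{1,\tfrac1r\,\dist(x,A)\}$: since $\dist(\cdot,A)$ is $1$-Lipschitz, $\alpha$ is $\tfrac1r$-Lipschitz, $\alpha|_A\equiv 0$, and $\alpha\equiv 1$ outside $B(A,r)$, i.e. hypothesis (e) holds. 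Now Lemma \ref{BasicExtLemma}, applied to $f$, $g:=g_0$, $p$, $\alpha$, yields $h(x)=\alpha(x)g_0(x)+(1-\alpha(x))f(p(x))$, a partition of unity on $X$ (its values lie in the convex simplex $\Delta(S)$) that is $(\epsilon,\epsilon)$-Lipschitz; and since $\alpha=0$ and $p=\mathrm{id}$ on $A$, $h$ restricts to $f$ there. Setting $g:=h$ finishes the argument.

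I do not expect a genuine obstacle: Lemma \ref{BasicExtLemma} absorbs all the arithmetic, and the only points needing a moment's thought are the easy facts that every metric space carries a $(\delta,\delta)$-Lipschitz partition of unity into any prescribed nonempty $\Delta(S)$ — supplied by the constant map $g_0$ — and that the explicit $p$ and $\alpha$ above satisfy conditions (d) and (e) verbatim.
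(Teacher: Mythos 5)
Your proposal is correct and follows essentially the same route as the paper: choose $r$ and $\delta$ to satisfy the hypotheses of Lemma \ref{BasicExtLemma}, take the constant partition of unity at a single vertex for hypothesis (c) (the paper writes this as the term $\alpha(x)\cdot v$), build $p$ and $\alpha(x)=\min\{1,\dist(x,A)/r\}$ exactly as you do, and invoke the lemma. The only cosmetic difference is your $r=4/\epsilon$ versus the paper's $r=8/\epsilon$, both of which meet the lemma's requirement $r\ge 4/\epsilon$.
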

\begin{proof}
Pick $r= \frac{8}{\epsilon}$. Once $r$ is fixed choose $\delta$ smaller than both
$\frac{\epsilon}{3}-\frac{2}{3r}=\frac{\epsilon}{4}$ and $\frac{\epsilon}{4r+7}$. 
Suppose $f:A\to \Delta(S)$ is a $(\delta,\delta)$-Lipschitz partition of unity on $A$.
Obviously, there is a retraction $p:X\to A$ such that $d(x,p(x)) < dist(x,A)+1$ for all $x\in A$.
Consider $\alpha:X\to [0,1]$ defined by $\alpha(x)=\min(\frac{d(x,A)}{r},1)$. Notice it is $\frac{1}{r}$-Lipschitz.
Define $g:X\to \Delta(S)$ via $g(x)=\alpha(x)\cdot v+(1-\alpha(x))\cdot f(p(x))$, where $v$ is some fixed point in $S$.
By \ref{BasicExtLemma}, $g$
 extends $f$ and is $(\epsilon,\epsilon)$-Lipschitz.
\end{proof}

\section{Unifying asymptotic dimension and large scale paracompactness}

In this section we develop a result that allows a unified approach to both asymptotic dimension and large scale paracompactness.

Classical dimension theory of topological spaces has the following three threads that are relevant to this paper (the fourth thread is that of inductive definitions of dimension):
\begin{itemize}
\item dimension defined using multiplicity of covers (commonly known as the covering dimension),
\item Ostrand-Kolmogorov version of covering dimension (see \cite{O$_1$} and
\cite{OstrandDimofMetriSpacesHilbert}),
\item dimension defined via extending maps to spheres. 
\end{itemize}
 
 Gromov \cite{Grom} defined asymptotic dimension by interpreting the first thread. It turns out that definition also generalizes the second thread as seen in Theorem 9.9 (p.131 of \cite{Roe lectures}). The definition of asymptotic dimension in \cite{Roe lectures} (see p.129) can be translated using \cite{DyHo} to the language of uniformly bounded covers (as opposed to the language of controlled sets of \cite{Roe lectures}) as follows:
 
 \begin{Definition}\label{AsDimDef}
A coarse space $X$ has \textbf{asymptotic dimension} at most $n$ ($n$ a given non-negative integer)
if for every uniformly bounded cover $\mathcal{U}$ of $X$ there exist uniformly bounded families $\mathcal{V}_0,\ldots,\mathcal{V}_n$
that are $\mathcal{U}$-disjoint (i.e. each element of $\mathcal{U}$ intersects at most one element of $\mathcal{V}_i$)
and $X=\bigcup\limits_{i=0}^n \mathcal{V}_i$.
\end{Definition}

Definition \ref{AsDimDef} is in the spirit of Ostrand-Kolmogorov and is equivalent to the following (see Theorem 9.9 on p.131 in \cite{Roe lectures}): 
A coarse space $X$ has \textbf{asymptotic dimension} at most $n$ (notation: $\asdim(X)\leq n$, $n$ a given non-negative integer)
if for every uniformly bounded cover $\mathcal{U}$ of $X$ there exists a uniformly bounded cover $\mathcal{V}$ of $X$ such that  each element of $\mathcal{U}$ intersects at most $n+1$ elements of $\mathcal{V}$.

The first attempt to generalize the third thread of dimension theory was initiated by Dranishnikov \cite{Dran AsyTop}.
\cite{DydakMitra lsAE} contains a different take on that issue and it centers on the concept of a large scale absolute extensor.
Recall that, in case $K$ is a bounded metric space, $K$ is a \textbf{large scale absolute extensor} of $X$ if for all $\epsilon > 0$ there is $\delta > 0$ such that for any subset $A$ of
$X$ any $(\delta,\delta)$-Lipschitz function $f\colon A\to K$
extends to an $(\epsilon,\epsilon)$-Lipschitz function $g\colon X\to K$ (see \cite{DydakMitra lsAE}). 

It turns out (see \cite{DydakMitra lsAE}) that $S^n$ being a large scale absolute extensor of $X$ is related to the dimension of the Higson corona of $X$ being at most $n$ (in case $X$ is a proper metric space) and, if $X$ is of finite asymptotic dimension, then
it is equivalent to $\asdim(X)\leq n$. It remains an open problem if $\asdim(X)\leq n$ provided $S^n$ is a large scale absolute extensor of $X$. In this section we propose another version of generalizing the third thread of dimension theory as follows:

\begin{Definition}\label{AsdimRelFunction}
 Let $X$ be a metric space, $n\leq \infty$, $\alpha$ be a function on a subset $D_{\alpha}$ of $(0,\infty)$ to $(0,\infty)$, and $M: D_{\alpha}\times (0,\infty)\to (0,\infty)$ be a function. We say the \textbf{large scale extension dimension of} $X$ \textbf{with respect to} $\alpha$ and $M$ is at most $n$
(notation $\lsdim(X,\alpha,M)\leq n$) if for any set $S$ of cardinality bigger than $\card(X\times \mathbb{N})$, any $K > 0$, any $(\alpha(\delta),\alpha(\delta))$-Lipschitz map $f:A\subset X\to \Delta(S)^{(n)}$ ($\delta\in D_{\alpha}$) that is $K$-cobounded
extends to a $(\delta,\delta)$-Lipschitz map $g:X\to \Delta(S)^{(n)}$ that is $M(\delta,K)$-cobounded.
\end{Definition}

\begin{Remark}
Notice that if Definition \ref{AsdimRelFunction} holds for one set $S$, then it holds for any set of cardinality bigger than $\card(X\times \mathbb{N})$. Indeed, given a partition of unity $f:A\subset X\to \Delta(S)$, the carrier of $f$ has vertices forming a set of cardinality at most $\card(X\times \mathbb{N})$. That can be easily established by noticing that, for each $k\ge 0$, vertices generated by $x\in A$ such that $f(x)$ is in the geometric interior of a $k$-simplex, form a set of cardinality at most $\card(X\times \mathbb{N})$.
\end{Remark}

\begin{Theorem}\label{MainGeneralizationThirdThread}
 Let $X$ be a metric space, $n\leq \infty$, and $S$ is a set of cardinality bigger that $\card(X\times \mathbb{N})$. The following conditions are equivalent:
\begin{itemize}
\item[1.] For each $\epsilon > 0$ there is an $(\epsilon,\epsilon)$-Lipschitz partition of unity
$f:X\to \Delta(S)^{(n)}$ such that the family $\{f^{-1}(st(v))\}_{v\in S}$ is uniformly bounded.

\item[2.] There are functions $\alpha: (0,\infty)\to (0,\infty)$, $M: (0,\infty)\times (0,\infty)\to (0,\infty)$ such that $\lsdim(X,\alpha, M)\leq n$.
\end{itemize}
\end{Theorem}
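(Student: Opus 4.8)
The plan is to treat the two implications separately. The implication $(2)\Rightarrow(1)$ is soft: given $\epsilon>0$, apply the extension property granted by $\lsdim(X,\alpha,M)\le n$ with $\delta:=\epsilon$ to the most trivial possible data — take $A$ to be a one-point subset $\{x_0\}$ (or $A=\emptyset$), $K:=1$, and $f:A\to\Delta(S)^{(n)}$ constant at a vertex, which is vacuously $(\alpha(\epsilon),\alpha(\epsilon))$-Lipschitz and $1$-cobounded. The resulting $g:X\to\Delta(S)^{(n)}$ is $(\epsilon,\epsilon)$-Lipschitz and $M(\epsilon,1)$-cobounded, so $\{g^{-1}(st(v))\}_{v\in S}$ is uniformly bounded, which is $(1)$.

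For $(1)\Rightarrow(2)$ the plan is to manufacture $\alpha$ and $M$ from Lemma \ref{BasicExtLemma}. Given $\delta>0$, I would fix the constants exactly as in the proof of Theorem \ref{NormalityTheorem}: put $r:=8/\delta$ and let $\alpha(\delta)$ be a sufficiently small positive number, at any rate below $\min\{\tfrac{\delta}{3}-\tfrac{2}{3r},\ \tfrac{\delta}{4r+7}\}$, so that Lemma \ref{BasicExtLemma} turns $(\alpha(\delta),\alpha(\delta))$-Lipschitz ingredients into a $(\delta,\delta)$-Lipschitz paste. Now suppose $K>0$ and $f:A\subset X\to\Delta(S)^{(n)}$ is $(\alpha(\delta),\alpha(\delta))$-Lipschitz and $K$-cobounded, with $\card(S)>\card(X\times\mathbb{N})$. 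I would apply hypothesis $(1)$ with $\epsilon:=\alpha(\delta)$ to obtain an $(\alpha(\delta),\alpha(\delta))$-Lipschitz partition of unity $g_0:X\to\Delta(S)^{(n)}$ whose stars are uniformly bounded, by some $L=L(\delta)$ (fix one such $g_0$ once and for all for each $\delta$). By the Remark following Definition \ref{AsdimRelFunction}, the carriers of $f$ and of $g_0$ each use at most $\card(X\times\mathbb{N})<\card(S)$ vertices, so precomposing $g_0$ with a bijection of vertex sets I may assume its carrier is vertex-disjoint from that of $f$ while keeping both the Lipschitz and the coboundedness constants (a bijection of vertices is an $\ell_1$-isometry of the simplicial complexes). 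Feeding $A$, $f$, $g_0$, a retraction $p:X\to A$ with $d(x,p(x))<\dist(x,A)+1$, and the cutoff $x\mapsto\min(d(x,A)/r,1)$ into Lemma \ref{BasicExtLemma} then produces a $(\delta,\delta)$-Lipschitz extension of $f$ which, by the disjointness of the carriers together with the second conclusion of that lemma, is $(\max\{K,L\}+2r+2)$-cobounded. Setting $M(\delta,K):=\max\{K,L(\delta)\}+16/\delta+2$ would give $\lsdim(X,\alpha,M)\le n$, hence $(2)$.

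The step this outline does not handle — and the one I expect to be the real obstacle — is that the extension must be valued in $\Delta(S)^{(n)}$, not merely in $\Delta(S)$. On the transition annulus $B(A,r)\setminus A$ the convex combination $\min(d(x,A)/r,1)\,g_0(x)+(1-\min(d(x,A)/r,1))\,f(p(x))$ has support $\operatorname{supp} g_0(x)\sqcup\operatorname{supp} f(p(x))$, of size up to $2n+2$, precisely because the carriers were made disjoint, so the naive paste lands only in $\Delta(S)^{(2n+1)}$. Keeping it in $\Delta(S)^{(n)}$ will require routing the transition from $f\circ p$ to $g_0$ through a bounded chain of intermediate faces whose supports never exceed $n+1$, exploiting the latitude available: $\alpha(\delta)$ may be taken so small that $f$ and $g_0$ are almost locally constant on a scale dwarfing $r$, the scale at which $(1)$ is invoked is ours to choose, and $S$ is large enough to relabel vertices at will. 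Once that bookkeeping is arranged, the Lipschitz and coboundedness bounds are exactly the ones delivered by Lemma \ref{BasicExtLemma} (with $r$ and $\alpha(\delta)$ possibly adjusted to accommodate the routing), and the argument closes.
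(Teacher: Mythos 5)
Your implication $(2)\Rightarrow(1)$ coincides with the paper's argument, and your setup for $(1)\Rightarrow(2)$ (pasting via Lemma \ref{BasicExtLemma} with a retraction, a cutoff function, and a globally defined cobounded partition of unity supplied by condition $1$, after relabeling vertices so the two carriers are disjoint) is also essentially the paper's; your explicit relabeling step is in fact a welcome clarification of a point the paper leaves implicit when it invokes the coboundedness half of Lemma \ref{BasicExtLemma}.

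However, the step you flag as unresolved is a genuine gap, and it is exactly the point where the paper does something you do not: for $n<\infty$ the pasted map lands only in $\Delta(S)$ (supports of size up to $2n+2$, as you note), and your proposed fix --- ``routing the transition through a bounded chain of intermediate faces'' by taking $\alpha(\delta)$ very small --- is not an argument; nothing in your outline produces the required faces or controls the Lipschitz constant of such a rerouted map, and smallness of $\alpha(\delta)$ alone does not force the transition region to stay in the $n$-skeleton. The paper resolves this by quoting a quantitative skeleton-approximation result from \cite{CDV2}: for every $\epsilon>0$ and finite $n$ there is $\mu>0$ such that any $(\mu,\mu)$-Lipschitz partition of unity $g:X\to\Delta(S)$ admits an $(\epsilon,\epsilon)$-Lipschitz $h:X\to\Delta(S)^{(n)}$ with $h(x)=g(x)$ whenever $g(x)\in\Delta(S)^{(n)}$ and with $h(x)(v)>0\Rightarrow g(x)(v)>0$. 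The order of choices matters: first $\mu$ is chosen from $\epsilon$ via this result, then $r=8/\mu$ and $\delta=\alpha(\epsilon)$ are chosen as in Theorem \ref{NormalityTheorem}, the paste is performed at the $(\mu,\mu)$-level in $\Delta(S)$, and only afterwards is the map pushed into $\Delta(S)^{(n)}$; the support condition $h(x)(v)>0\Rightarrow g(x)(v)>0$ gives $h^{-1}(st(v))\subset g^{-1}(st(v))$, so the coboundedness bound $\max(K,Q)+2r+2$ survives the modification and can be taken as $M(\epsilon,K)$. Without this (or an equivalent quantitative statement you would have to prove), your argument establishes the theorem only for $n=\infty$, whereas the finite-$n$ case is the one that encodes asymptotic dimension.
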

\begin{proof}
2)$\implies$1). Let $A$ be a point in $X$ and let $f:A\to \Delta(S)^{(n)}$ be a constant map
to a vertex.
For each $\epsilon > 0$, $f$ is $(\alpha(\epsilon),\alpha(\epsilon))$-Lipschitz and $1$-cobounded, so it extends to an $(\epsilon,\epsilon)$-Lipschitz $g:X\to \Delta(S)^{(n)}$
that is $M(\epsilon,1)$-cobounded.
\par
1)$\implies$2). 
Suppose $\epsilon > 0$ and $K > 0$.
Pick $\mu > 0$ with the property that for any $(\mu,\mu)$-Lipschitz partition of unity
$g:X\to \Delta(S)$ there is an $(\epsilon,\epsilon)$-Lipschitz 
$h:X\to \Delta(S)^{(n)}$ so that $g(x)\in \Delta(S)^{(n)}$ implies $h(x)=g(x)$
and $h(x)(v) > 0$ implies $g(x)(v) > 0$ for all $x\in X$ and $v\in S$.
For $n < \infty$ existence of $\mu$ is established in \cite{CDV2}, for $n=\infty$ we put $\mu=\epsilon$ (as $h=g$ works).

Pick $r= \frac{8}{\mu}$. Once $r$ is fixed choose $\delta$ smaller than both
$\frac{\mu}{3}-\frac{2}{3r}=\frac{\mu}{4}$ and $\frac{\mu}{4r+7}$. Put $\alpha(\epsilon)=\delta$.
Suppose $f:A\to \Delta(S)$ is a $(\delta,\delta)$-Lipschitz partition of unity on $A$
that is $K$-cobounded.
Obviously, there is a retraction $p:X\to A$ such that $d(x,p(x)) < dist(x,A)+1$ for all $x\in A$.
Consider $\gamma:X\to [0,1]$ defined by $\gamma(x)=\min(\frac{d(x,A)}{r},1)$. Notice it is $\frac{1}{r}$-Lipschitz.
Define $g:X\to \Delta(S)$ via $g(x)=\gamma(x)\cdot u(x)+(1-\alpha(x))\cdot f(p(x))$, where $u$ is some $(\delta,\delta)$-Lipschitz partition of unity $u:X\to \Delta(S)^{(n)}$ that is $Q$-cobounded for some $Q > 0$.
By \ref{BasicExtLemma} $g$
 extends $f$, is $(\mu,\mu)$-Lipschitz, and is $(\max(K,Q)+2r+2)$-cobounded.
Now, modify $g$ to obtain an $(\epsilon,\epsilon)$-Lipschitz 
$h:X\to \Delta(S)^{(n)}$ so that $g(x)\in \Delta(S)^{(n)}$ implies $h(x)=g(x)$
and $h(x)(v) > 0$ implies $g(x)(v) > 0$ for all $x\in X$ and $v\in S$.
Notice $h$ is $\max(K,Q)+2r+2$-cobounded. That means putting $M(\epsilon,K)= \max(K,Q)+2r+2$ works and the proof is completed.
\end{proof}

\begin{Remark}
Notice Theorem \ref{MainGeneralizationThirdThread} provides a very good unification of Property A and asymptotic dimension.
For $n$ finite, Condition 1 in \ref{MainGeneralizationThirdThread} amounts to $\asdim(X)\leq n$.
For $n=\infty$ that condition is equivalent to $X$ being large scale paracompact which, in case of $X$ being of bounded geometry, is equivalent to $X$ having Property A (see  \cite{CDV4}).
\end{Remark}

 \section{Countable asymptotic dimension}
 This section is devoted to generalizing Definition \ref{AsDimDef} to the case of infinite asymptotic dimension.
 Using the Ostrand-Kolmogorov approach as a blueprint (and in analogy to the concept of countable covering dimension)
 we propose the following:
 
\begin{Definition}\label{CountableAsdim}
A metric space $X$ is of \textbf{countable asymptotic dimension} if there is a sequence of integers $n_i\ge 1$, $i\ge 1$, such that for any sequence of positive real numbers $R_i$, $i\ge 1$, there is a sequence
$\mathcal{V}_i$ of families of subsets of $X$ such that the following conditions are satisfied:
\begin{itemize}
\item[1.] $\mathcal{V}_1=\{X\}$,
\item[2.] each element $U\in \mathcal{V}_i$ can be expressed as a union of at most $n_i$ families from $\mathcal{V}_{i+1}$ that are $R_i$-disjoint,
\item[3.] at least one of the families $\mathcal{V}_i$ is uniformly bounded.
\end{itemize}
\end{Definition}

\begin{Proposition}
If a metric space $X$ is of straight finite decomposition complexity, then $X$ is of countable asymptotic dimension.
\end{Proposition}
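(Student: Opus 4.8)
The plan is to verify Definition~\ref{CountableAsdim} with the constant sequence $n_i = 2$ for all $i \ge 1$. The only gaps between the two notions are that straight finite decomposition complexity is phrased for a strictly increasing sequence of scales, produces a \emph{finite} tower whose length depends on the scales, and always splits into at most two pieces, whereas countable asymptotic dimension asks for an infinite tower, a fixed multiplicity bound at each level, and only that \emph{some} level be uniformly bounded. So I first normalize: given an arbitrary sequence $R_1, R_2, \ldots$ of positive reals, set $R_i' = \max\{R_1, \ldots, R_i\} + i$, so that $R_1' < R_2' < \cdots$ and $R_i' > R_i$; note that every $R_i'$-disjoint family is $R_i$-disjoint.

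Next I apply the hypothesis to the increasing sequence $R_1' < R_2' < \cdots$, obtaining $n \in \mathbb{N}$ and families $\mathcal{V}_1, \ldots, \mathcal{V}_n$ with $\mathcal{V}_1 = \{X\}$, each $U \in \mathcal{V}_i$ ($i < n$) a union of at most two $R_i'$-disjoint subfamilies of $\mathcal{V}_{i+1}$, and $\mathcal{V}_n$ uniformly bounded. I extend this to an infinite sequence by declaring $\mathcal{V}_i = \mathcal{V}_n$ for every $i > n$.

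Finally I check the three conditions of Definition~\ref{CountableAsdim} with $n_i = 2$. Condition (1) is immediate. For (2) with $i < n$, the decomposition supplied by straight finite decomposition complexity exhibits $U \in \mathcal{V}_i$ as a union of at most $2 = n_i$ families from $\mathcal{V}_{i+1}$ that are $R_i'$-disjoint, hence $R_i$-disjoint. For $i \ge n$ we have $\mathcal{V}_i = \mathcal{V}_{i+1} = \mathcal{V}_n$, and any $U \in \mathcal{V}_n$ is the union of the one-element subfamily $\{U\} \subseteq \mathcal{V}_{i+1}$, which is vacuously $R_i$-disjoint, so again $U$ is a union of at most two $R_i$-disjoint families from $\mathcal{V}_{i+1}$. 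Condition (3) holds because $\mathcal{V}_n$ is uniformly bounded. There is no real obstacle here: the statement is pure bookkeeping, the only points worth noting being the normalization of the scales to be increasing and the trivial fact that a singleton family is $R$-disjoint for every $R$, which lets the finite tower be padded out indefinitely.
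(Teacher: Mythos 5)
Your proof is correct and takes essentially the same approach as the paper, which simply observes that the constant sequence $n_i=2$ works. You merely spell out the routine bookkeeping the paper leaves implicit (passing to a larger strictly increasing sequence of scales and padding the finite tower by repeating the uniformly bounded family, using that singleton families are vacuously $R$-disjoint), and these details are handled correctly.
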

\begin{proof}
Recall $X$ is of \textbf{straight finite decomposition complexity} \cite{DranZari}  if 
for any increasing sequence of positive real numbers $R_1 < R_2 < \ldots$ there a sequence
$\mathcal{V}_i$, $i \leq n$, of families of subsets of $X$ such that the following conditions are satisfied:
\begin{itemize}
\item[1.] $\mathcal{V}_1=\{X\}$,
\item[2.] each element $U\in \mathcal{V}_i$, $i < n$, can be expressed as a union of at most $2$ families from $\mathcal{V}_{i+1}$ that are $R_i$-disjoint,
\item[3.] $\mathcal{V}_n$ is uniformly bounded.
\end{itemize}
That means $n_i=2$ for $i\ge 1$ works.
\end{proof}


Our next concept generalizes Definition \ref{AsdimRelFunction}.

\begin{Definition}\label{AsdimRelFunction2}
Suppose $X$ is a subset of a metric space $Y$, $n\leq \infty$, $\alpha$ is a function on a subset $D_{\alpha}$ of $(0,\infty)$ to $(0,\infty)$, and $M: D_{\alpha}\times (0,\infty)\to (0,\infty)$ is a function. We say the \textbf{large scale extension dimension of} $X$ \textbf{with respect to} $Y$, $\alpha$, and $M$ is at most $n$
(notation $\lsdim(X,Y,\alpha,M)\leq n$) if for any set $S$ of cardinality bigger than $\card(Y\times \mathbb{N})$, any $K > 0$, any $(\alpha(\delta),\alpha(\delta))$-Lipschitz map $f:A\subset Y\to \Delta(S)^{(n)}$ ($\delta\in D_{\alpha}$) that is $K$-cobounded
extends to a $(\delta,\delta)$-Lipschitz map $g:A\cup X\to \Delta(S)^{(n)}$ that is $M(\epsilon,K)$-cobounded.
\end{Definition}

\begin{Lemma}\label{BasicAsdimLemma}
Suppose $\alpha:[a,\infty)\to [b,\infty)$ and $\beta:[b,\infty)\to (0,\infty)$ are functions.
Let $\{W_t\}_{t\in T}$ be an $R$-disjoint family of subsets of $X$ such that
 $\lsdim(W_t,X,\alpha,M)\leq n$ for each $t\in T$. If $\lsdim(B,X,\beta,M_B)\leq n$ for some $B\subset X$,
then $$\lsdim(B\cup\bigcup_{t\in T} W_t,X,\beta\circ\alpha,M_1)\leq n$$
provided $a\ge \frac{2}{R+1}$ and $M_1(u,K)=2\cdot M(u,M_B(\alpha(u),K))+M_B(\alpha(u),K)$.
\end{Lemma}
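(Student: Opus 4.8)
The plan is to verify Definition \ref{AsdimRelFunction2} head-on and to build the required extension in two stages, first over $B$ (using $\beta$) and then over the $R$-disjoint pieces $W_t$ (using $\alpha$), combining the latter by the gluing principle that an $R$-disjoint union of $(\delta,\delta)$-Lipschitz partitions of unity into $\Delta(S)^{(n)}$ is again $(\delta,\delta)$-Lipschitz as soon as $\delta(R+1)\ge 2$. Concretely: fix a set $S$ with $\card(S)>\card(X\times\N)$, a constant $K>0$, a parameter $\delta\in[a,\infty)$, and a $(\beta(\alpha(\delta)),\beta(\alpha(\delta)))$-Lipschitz, $K$-cobounded map $f\colon A\subset X\to\Delta(S)^{(n)}$. \emph{Stage 1:} since $\alpha(\delta)\in[b,\infty)$ lies in the domain of $\beta$ and $f$ is $(\beta(\alpha(\delta)),\beta(\alpha(\delta)))$-Lipschitz, the hypothesis $\lsdim(B,X,\beta,M_B)\le n$, used with parameter $\alpha(\delta)$, extends $f$ to a $(\alpha(\delta),\alpha(\delta))$-Lipschitz, $M_B(\alpha(\delta),K)$-cobounded map $g_1\colon A\cup B\to\Delta(S)^{(n)}$. \emph{Stage 2:} for each $t\in T$, the hypothesis $\lsdim(W_t,X,\alpha,M)\le n$, applied to $g_1$ with parameter $\delta\in[a,\infty)$, extends $g_1$ to a $(\delta,\delta)$-Lipschitz, $M(\delta,M_B(\alpha(\delta),K))$-cobounded map $\hat g_t\colon(A\cup B)\cup W_t\to\Delta(S)^{(n)}$.

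Before gluing I would re-route vertices so that the various extensions become "carrier-disjoint away from $g_1$". By the Remark following Definition \ref{AsdimRelFunction}, the carrier of $g_1$ and of each $\hat g_t$ has at most $\card(X\times\N)$ vertices, the carrier of $\hat g_t$ contains that of $g_1$, and $|T|\le\card(X)$; hence there is room in $S$ to pick coordinate permutations $\sigma_t$ of $\Delta(S)$ — each an isometry of $l_1(S)$ preserving every skeleton $\Delta(S)^{(n)}$ and carrying $st(v)$ to $st(\sigma_t(v))$ — that fix the vertex set of the carrier of $g_1$ pointwise and send the remaining vertices of the carrier of $\hat g_t$ onto pairwise disjoint fresh vertex sets. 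Put $\tilde g_t:=\sigma_t\circ\hat g_t$; then $\tilde g_t$ is still a $(\delta,\delta)$-Lipschitz, $M(\delta,M_B(\alpha(\delta),K))$-cobounded extension of $g_1$ (since $\sigma_t$ fixes the support of $g_1$), and for $t\ne t'$ the carriers of $\tilde g_t$ and $\tilde g_{t'}$ meet only inside the carrier of $g_1$. Now define $g$ on $A\cup B\cup\bigcup_t W_t$ by $g=g_1$ on $A\cup B$ and $g=\tilde g_t$ on $W_t$; this is well defined because the $W_t$ are pairwise disjoint ($R$-disjoint with $R>0$) and each $\tilde g_t$ agrees with $g_1$ on $A\cup B$, and $g$ extends $f$ with values in $\Delta(S)^{(n)}$.

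It then remains to check the two numerical claims. For the Lipschitz bound, two points lying in a common set $(A\cup B)\cup W_t$ are controlled by the $(\delta,\delta)$-Lipschitz map $\tilde g_t$ — in particular, when $T\ne\emptyset$ this also shows $g_1$ itself is $(\delta,\delta)$-Lipschitz, while the degenerate case $T=\emptyset$ uses that we may assume $\alpha(u)\le u$ — whereas for $x\in W_t\setminus(A\cup B)$ and $y\in W_{t'}\setminus(A\cup B)$ with $t\ne t'$ one has $d(x,y)>R$, so $d(g(x),g(y))\le 2\le a(R+1)\le\delta(R+1)\le\delta\, d(x,y)+\delta$ by $a\ge\tfrac{2}{R+1}$. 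For coboundedness, fix $v\in S$: if $v$ lies in no carrier then $g^{-1}(st(v))=\emptyset$; if $v$ lies only in the carrier of $\tilde g_t$ then $g^{-1}(st(v))=\tilde g_t^{-1}(st(v))$, of diameter $\le M(\delta,M_B(\alpha(\delta),K))$; and if $v$ lies in the carrier of $g_1$ then $g^{-1}(st(v))$ is the union of $g_1^{-1}(st(v))$ (diameter $\le M_B(\alpha(\delta),K)$) with the sets $\tilde g_t^{-1}(st(v))$, each of which contains $g_1^{-1}(st(v))$ and has diameter $\le M(\delta,M_B(\alpha(\delta),K))$, so that its diameter is at most $2M(\delta,M_B(\alpha(\delta),K))+M_B(\alpha(\delta),K)=M_1(\delta,K)$. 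The step I expect to be the real obstacle is exactly this last estimate: without the re-routing in Stage 2 a single vertex could be used by extensions over $W_t$'s that are arbitrarily far apart, which would make $\diam(g^{-1}(st(v)))$ uncontrollable; the cardinality hypothesis on $S$ is precisely what defeats this, after which the only shared vertices are those of $g_1$, and these are harmless because $g_1^{-1}(st(v))$ sits inside every $\tilde g_t^{-1}(st(v))$ and each of these has controlled diameter.
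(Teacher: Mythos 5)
Your proposal is correct and follows essentially the same route as the paper: extend first over $B$ at the scale $\alpha(\delta)$, then over each $W_t$ separately at scale $\delta$, arrange the newly introduced vertices to be distinct for different $t$, glue, and use $a\ge\frac{2}{R+1}$ (the content of Lemma \ref{BasicLipLemma}) for pairs in different $W_t$'s, obtaining the same cobounded bound $2M(\delta,M_B(\alpha(\delta),K))+M_B(\alpha(\delta),K)$. The only differences are expository: you spell out the vertex re-routing via the cardinality of $S$ and note the degenerate case $T=\emptyset$, both of which the paper handles implicitly.
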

\begin{proof}
Suppose $A\subset X$ and $f:A\to \Delta(S)^{n}$ is $(\beta\circ\alpha(u),\beta\circ\alpha(u))$-Lipschitz
and $K$-cobounded for some $u\ge a$. Extend it to $g:A\cup B\to \Delta(S)^{n}$ which is $(\alpha(u),\alpha(u)$-Lipschitz
and $M_B(\alpha(u),K)$-cobounded. Now, for any $t\in T$, $g$ extends over $W_t$ to a $g_t$ function that
is $(u,u)$-Lipschitz and $M(u,M_B(\alpha(u),K))$-cobounded. We may arrange so that for $t_1\ne t_2$
new vertices introduced during extension are different.
Since $u\ge \frac{2}{R+1}$, $h=f\cup \bigcup_{t\in T}g_t$ is $(u,u)$-Lipschitz by \ref{BasicLipLemma}.
$h$ is $(2\cdot M(u,M_B(\alpha(u),K))+M_B(\alpha(u),K))$-cobounded. Indeed, new vertices have point inverses of their stars
arising from a single map $g_t$, so they are bounded by $M(u,M_B(\alpha(u),K))$.
Old vertices $v$ have their main part $g^{-1}(st(v))\ne\emptyset$ (of diameter at most $M_B(\alpha(u),K)$) enlarged by adding $g_t^{-1}(st(v))$ for each $t\in T$. Each union $g^{-1}(st(v))\cup g_t^{-1}(st(v))$ is of diameter at most $M(u,M_B(\alpha(u),K))$ resulting in $h$ being $M_1(u,K)$-cobounded. 

\end{proof}

\begin{Lemma}\label{BasicAsdimLemma2}
Suppose $\alpha:(0,\infty)\to (0,\infty)$ is a non-decreasing function such that 
the $q$-fold composition $\alpha^q$ satisfies $\alpha^q(a)\ge \frac{2}{R+1}$ for some $R > 0$, $q\ge 1$, and all $a > 0$. 
Let $M:[\alpha^q(a),\infty)\times (0,\infty)\to (0,\infty)$ be a function and
consider the family $\mathcal{V}$ of all subsets $W$ of $X$ satisfying
 $\lsdim(W,X,\alpha|[\alpha^q(a),\infty),M)\leq n$. There is a function $M_1:[a,\infty)\times (0,\infty)\to (0,\infty)$  such that if $U\subset X$ is the union of $q$ families in $\mathcal{V}$
that are $R$-disjoint, then $\lsdim(U,X,\alpha^q|[a,\infty),M_1)\leq n$.
\end{Lemma}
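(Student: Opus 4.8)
The plan is to argue by induction on $q$, peeling off one of the $R$-disjoint families at each stage and invoking Lemma \ref{BasicAsdimLemma}; the iterate $\alpha^q$ merely records the cumulative loss in the Lipschitz constant over the $q$ successive extensions, and $M_1$ arises by iterating the single-step coboundedness recursion of Lemma \ref{BasicAsdimLemma} $q$ times. A preliminary remark on the numerics: since $\alpha$ is non-decreasing, the sequence $(\alpha^j(a))_{j\ge 0}$ is monotone; I will treat the case in which it is non-increasing (the one relevant in applications, where $\alpha(\delta)\le\delta$), so that $\frac{2}{R+1}\le\alpha^q(a)\le\alpha^j(a)\le a$ for $0\le j\le q$ --- the remaining case is handled identically with $a$ replaced by $\alpha^q(a)$ everywhere. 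These inequalities are exactly what make the argument go through: each intermediate Lipschitz constant handed to the extension property of a member of $\mathcal{V}$ lies in that property's parameter domain $[\alpha^q(a),\infty)$, and each constant used to glue a family of $R$-disjoint extensions is at least $\frac{2}{R+1}$, so Lemma \ref{BasicLipLemma} applies inside Lemma \ref{BasicAsdimLemma}.

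For the base case $q=1$, suppose $U$ is the union of a single $R$-disjoint family $\{W_t\}_{t\in T}$ with every $W_t\in\mathcal{V}$. Apply Lemma \ref{BasicAsdimLemma} with $B=\emptyset$ (for which the extension property is vacuous, say with a constant cobound), with inner function $\alpha|[a,\infty)$ --- the extension property of the $W_t$, stated on the larger domain $[\alpha^q(a),\infty)=[\alpha(a),\infty)$, restricts to $[a,\infty)$ because $a\ge\alpha(a)$ --- and with the identity as outer function; the hypothesis $a\ge\frac{2}{R+1}$ furnishes the disjointness requirement of that lemma, yielding $\lsdim(U,X,\alpha|[a,\infty),M_1)\le n$ for a suitable $M_1$.

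For the inductive step, write $U=\bigcup\mathcal{W}_1\cup U'$, where $\mathcal{W}_1$ is one of the $q$ given $R$-disjoint families of members of $\mathcal{V}$ and $U'$ is the union of the remaining $q-1$ of them. Apply the inductive hypothesis to $U'$ with the parameter $a$ replaced by $\alpha(a)$: since $\alpha^{q-1}(\alpha(a))=\alpha^q(a)$, the family of subsets relevant to the inductive hypothesis is again exactly $\mathcal{V}$, and $\alpha^{q-1}(\alpha(a))=\alpha^q(a)\ge\frac{2}{R+1}$, so we obtain $\lsdim(U',X,\alpha^{q-1}|[\alpha(a),\infty),M')\le n$ for some $M'$. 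Now apply Lemma \ref{BasicAsdimLemma} once more with $\{W_t\}=\mathcal{W}_1$ (inner function $\alpha|[a,\infty)$, as in the base case), $B=U'$ (outer function $\beta=\alpha^{q-1}|[\alpha(a),\infty)$), and disjointness constant $R$: because $\alpha$ is non-decreasing, $u\ge a$ forces $\alpha(u)\ge\alpha(a)$, so $\beta\circ(\alpha|[a,\infty))=\alpha^q|[a,\infty)$ is well defined, and the lemma delivers $\lsdim(U,X,\alpha^q|[a,\infty),M_1)\le n$ with $M_1(u,K)=2M(u,M'(\alpha(u),K))+M'(\alpha(u),K)$, closing the induction.

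The step I expect to be delicate is not geometric --- Lemma \ref{BasicAsdimLemma} does the real work --- but rather the bookkeeping of parameter domains through the recursion: one must check at every stage that the constant fed to a $\mathcal{V}$-set's extension property stays in $[\alpha^q(a),\infty)$, that the gluing constant stays above $\frac{2}{R+1}$, and that the new vertices introduced at distinct peels are kept mutually disjoint (possible since $S$ has cardinality exceeding $\card(X\times\mathbb{N})$ and only finitely many peels occur, the total carrier being of size at most $\card(X\times\mathbb{N})$). Verifying all this reduces to the monotonicity of $(\alpha^j(a))_j$ noted above together with the identities $\alpha^{q-1}\circ\alpha=\alpha^q$ and $\alpha^{q-1}(\alpha(a))=\alpha^q(a)$.
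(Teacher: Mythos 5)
Your proposal is correct and follows essentially the same route as the paper: induction on $q$, peeling off one $R$-disjoint family, applying the inductive hypothesis to the union of the remaining $q-1$ families with $a$ replaced by $\alpha(a)$, and then invoking Lemma \ref{BasicAsdimLemma} with $\beta=\alpha^{q-1}|[\alpha(a),\infty)$ (so that $\beta\circ\alpha=\alpha^q|[a,\infty)$), arriving at the same recursion $M_1(u,K)=2\cdot M(u,M'(\alpha(u),K))+M'(\alpha(u),K)$. Your explicit bookkeeping of the parameter domains (monotonicity of the iterates $\alpha^j(a)$ and the gluing constant staying above $\frac{2}{R+1}$) is a harmless elaboration of what the paper leaves implicit, matching the intended application where $E(x)<x$.
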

\begin{proof}
For $q=1$ it follows from Lemma \ref{BasicAsdimLemma}. Use induction on $q$ and apply Lemma \ref{BasicAsdimLemma} again as follows. Suppose $B\subset X$ is the union of $q-1$ families in $\mathcal{V}$
that are $R$-disjoint. By inductive assumption (we use $\alpha(a)$ instead of $a$), $\lsdim(B,X,\alpha^{q-1}|[\alpha(a),\infty),M_2)\leq n$ for some function $M_2:[\alpha(a),\infty)\times (0,\infty)\to (0,\infty)$. If $W$ is the union of a family in $\mathcal{V}$
that is $R$-disjoint, then put $\beta=\alpha^{q-1}|[\alpha(a),\infty)$.
Notice $\beta\circ\alpha=\alpha^q|[a,\infty)$.
Using \ref{BasicAsdimLemma}, we get 
$$\lsdim(B\cup W,\alpha^q|[a,\infty), M_1)\leq n$$
for $M_1:[a,\infty)\times (0,\infty)\to (0,\infty)$ defined by
$M_1(u,K)= 2\cdot M(u,M_2(\alpha(u),K))+M_2(\alpha(u),K)$.
\end{proof}

\begin{Theorem}\label{MainResult}
 Let $X$ be a metric space and $n\leq \infty$ such that $\Delta(X)^{(n)}$ is a large scale absolute extensor of $X$. If $X$ is of countable asymptotic dimension, then $\lsdim(X)\leq n$.
\end{Theorem}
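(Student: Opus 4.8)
The plan is to verify Condition 1 of Theorem \ref{MainGeneralizationThirdThread}. Since an $(\epsilon,\epsilon)$-Lipschitz map is $(\epsilon',\epsilon')$-Lipschitz for every $\epsilon'\geq\epsilon$, it is enough to produce, for arbitrarily small $\epsilon>0$, a cobounded $(\epsilon,\epsilon)$-Lipschitz partition of unity $X\to\Delta(S)^{(n)}$; equivalently, applying the extension to the constant map of a point, it suffices to exhibit functions $\alpha,M$ whose domain $D_\alpha$ contains arbitrarily small numbers with $\lsdim(X,X,\alpha,M)\leq n$ (note $A\cup X=X$, so this is just $\lsdim(X,\alpha,M)\leq n$). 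By the Remark after Theorem \ref{MainGeneralizationThirdThread} this forces $\asdim(X)\leq n$ when $n$ is finite, and recovers Theorem \ref{DranZariResult} in the special case of a finite skeleton.

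The base step is to show that a subset $W\subset X$ of diameter at most $D$ satisfies $\lsdim(W,X,\alpha_D,M_D)\leq n$ for explicit $\alpha_D$ (essentially $\alpha_D(\delta)=\delta/(2D+3)$) and $M_D(\delta,K)$ of the form $K+2D+O(1/\delta)$. Given a $K$-cobounded $(\alpha_D(\delta),\alpha_D(\delta))$-Lipschitz $f\colon A\to\Delta(S)^{(n)}$, one extends it over $A\cup W$ in one of two ways: as $f\circ p$, where $p\colon A\cup W\to A$ is a retraction with $d(x,p(x))<\dist(x,A)+1$ (used when $W$ meets a fixed neighbourhood of $A$, so $d(x,p(x))$ is controlled by $D$), or by collapsing $W$ to a single fresh vertex (used when $W$ is far from $A$, where the $l_1$-distance from any value of $f$ to the new vertex is exactly $2$, hence small compared to $\delta\cdot d(x,y)$). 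In both cases the extension remains inside $\Delta(S)^{(n)}$ and the Lipschitz and coboundedness constants are read off directly; fresh vertices are available because $\card(S)>\card(X\times\N)$ bounds the carrier of $f$, as in the Remark after Definition \ref{AsdimRelFunction}.

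Next, fix $\epsilon$, feed a sequence $R_1,R_2,\dots$ into the hypothesis of countable asymptotic dimension, and obtain families $\mathcal V_1=\{X\},\mathcal V_2,\dots$ with $\mathcal V_m$ uniformly bounded (say by $D$) for some $m$. Starting from the base step applied to the elements of $\mathcal V_m$, and running Lemma \ref{BasicAsdimLemma2} (equivalently, iterating Lemma \ref{BasicAsdimLemma}) once for each passage from $\mathcal V_{i+1}$ to $\mathcal V_i$ — using that every element of $\mathcal V_i$ is a union of at most $n_i$ $R_i$-disjoint families from $\mathcal V_{i+1}$ — one propagates the estimate up to $\mathcal V_1$ and obtains $\lsdim(X,X,\beta,M)\leq n$, where $\beta$ is a nested composition of $\alpha_D$ (on a shifted domain) and $M$ is the corresponding explicit, finite function. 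Lemma \ref{BasicAsdimLemma2} is designed precisely to package the $n_i$-fold gluing needed at each level while tracking exactly how the coboundedness constant grows.

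The hard part — and the step where the hypothesis that $\Delta(X)^{(n)}$ is a large scale absolute extensor of $X$ is used essentially — is controlling this nested composition. The integers $m$ and the bound $D$ are not under our control: they are produced only after the $R_i$ have been fixed, whereas the constraints in Lemmas \ref{BasicAsdimLemma} and \ref{BasicAsdimLemma2} (the condition $\beta^{q}(a)\geq\frac{2}{R_i+1}$ at each level, and the upward shrinking of the domains under the compositions) force the $R_i$ to be large in terms of $m$ and $D$, and a naive choice runs into a genuine circularity; moreover the depth $m$ of the decomposition can be arbitrarily large for a fixed space, so the plain iteration would only produce a partition of unity that is $(\cdot,\cdot)$-Lipschitz at a scale blown up by a factor like $(2D+3)^{n_1\cdots n_{m-1}}$, i.e.\ useless. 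To get around this I would not glue all the way up the tree: one fixes a cut-off, performs only a controlled number of applications of Lemma \ref{BasicAsdimLemma2} near the bounded level, and then uses the large scale absolute extensor property of $\Delta(X)^{(n)}$ to re-extend across the remaining part of the decomposition by a single extension whose scale is governed only by $\epsilon$ — this is exactly the point of the hypothesis, as it trades the uncontrolled tail of nested compositions for one controlled application of an absolute extensor. Making this bookkeeping precise — choosing the $R_i$ recursively so that the cut-off is attained and all constraints of Lemmas \ref{BasicAsdimLemma} and \ref{BasicAsdimLemma2} hold simultaneously — is the technical core of the proof; once done, letting $\epsilon\to0$ and applying Theorem \ref{MainGeneralizationThirdThread} yields $\lsdim(X)\leq n$.
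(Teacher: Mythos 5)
Your overall skeleton (reduce to condition 1 of Theorem \ref{MainGeneralizationThirdThread}, establish an extension property for the elements of the uniformly bounded family $\mathcal{V}_m$, then climb the decomposition tree by iterating Lemmas \ref{BasicAsdimLemma} and \ref{BasicAsdimLemma2}) is the paper's, but the proof is not complete: the step you yourself call ``the technical core'' is exactly where the paper's key idea lives, and the fix you sketch would not work. The source of your circularity is your base step. Extending $f$ over a set $W$ of diameter at most $D$ by $f\circ p$ (or a fresh vertex when $W$ is far from $A$) needs no hypothesis on $X$, but it produces a gauge $\alpha_D$ that depends on $D$; since $D$ and the depth $m$ are revealed only after the $R_i$ have been declared, the constraints of the form $u\ge\frac{2}{R_i+1}$ in Lemma \ref{BasicAsdimLemma} then involve data you do not yet have. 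The paper instead uses the hypothesis that $\Delta(X)^{(n)}$ is a large scale absolute extensor precisely at the bottom level (its Claim 1): given an $(E(u),E(u))$-Lipschitz $f$ on $A$, one extends it over all of $X$ to a $(u,u)$-Lipschitz map into the $n$-skeleton, and then restores coboundedness over $U\in\mathcal{V}_m$ by composing, on $B(U,R_m)$, with the simplicial retraction of the carrier of $g(B(U,R_m))$ onto the carrier of $f(A\cap B(U,R_m))$ --- a trick that works only because $U$ is bounded. This makes the base gauge the fixed function $E$, independent of the diameter bound; the bound $K$ and the depth $m$ enter only the coboundedness functions $M_i$, which are unconstrained. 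Consequently every gauge appearing in the induction is an iterate of $E$ whose exponent is a product of the $n_i$, and since Definition \ref{CountableAsdim} fixes the sequence $(n_i)$ \emph{before} the $R_i$ are chosen, the scales can be prescribed in advance (the paper takes $\frac{2}{R_i+1}=E^{N(i)}(\epsilon)$ with $N(i)=n_1\cdots n_{i-1}$). That quantifier order is the point you do not exploit.

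Your proposed repair --- stop the gluing at a cut-off and ``re-extend across the remaining part of the decomposition by a single application of the absolute extensor property'' --- cannot close the gap. A bare large scale absolute extensor extension controls only the Lipschitz constant and gives no coboundedness at all, whereas condition 1 of Theorem \ref{MainGeneralizationThirdThread} demands that $\{f^{-1}(st(v))\}_{v\in S}$ be uniformly bounded; moreover the part of the tree above any cut-off consists of unbounded sets (ultimately $X$ itself), so the carrier-retraction device that rescues coboundedness in Claim 1 is unavailable there. (Also, your worry about the composed Lipschitz scale $(2D+3)^{n_1\cdots n_{m-1}}$ being ``useless'' is a misdiagnosis: a huge, $m$- and $D$-dependent gauge would be harmless for the final step, since one only applies the top-level property to a one-point $A$ for each $\epsilon$; what is fatal in your setup is solely that the $R_i$-constraints involve $D$ and $m$ before they are known.) So the missing idea is to invoke the extensor hypothesis at the bounded level to make the gauge $D$-independent, not at the top; without that, the argument does not go through.
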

\begin{proof}

Pick a function $E:(0,\infty)\to (0,\infty)$ such that $E(x) < x$ for all $x$ and 
any $(E(x),E(x))$-Lipschitz function $f:A\subset X\to \Delta(X)^{(n)}$ extends to an $(x,x)$-Lipschitz function $g:X\to \Delta(X)^{(n)}$. We may assume $E$ is non-decreasing (replace $E(x)$ by
$\sup\{E(t)/2 | t < x\}$ if necessary).
Suppose $S$ is a set of cardinality bigger than $\card(X\times \mathbb{N})$.
Point out that any $(E(x),E(x))$-Lipschitz function $f:A\subset X\to \Delta(S)^{(n)}$ extends to an $(x,x)$-Lipschitz function $g:X\to \Delta(S)^{(n)}$.
Given $k > 0$, by $E^k$ we mean the composition $E\circ\ldots\circ E$ of $k$ copies of $E$.
$E^0=id$.

There is a sequence of integers $n_i\ge 1$, $i\ge 1$, such that for any sequence of positive real numbers $R_i$, $i\ge 1$, there is a sequence
$\mathcal{V}_i$ of families of subsets of $X$ such that the following conditions are satisfied:
\begin{itemize}
\item[1.] $\mathcal{V}_1=\{X\}$,
\item[2.] each element $U\in \mathcal{V}_i$ can be expressed as a union of at most $n_i$ families from $\mathcal{V}_{i+1}$ that are $R_i$-disjoint,
\item[3.] at least one of the families $\mathcal{V}_i$ is uniformly bounded.
\end{itemize}

Let $N(1)=0$ and let $N(i)=\prod\limits_{j=1}^{i-1} n_j$ for $i\ge 1$.

Given $2 > \epsilon > 0$ define $R_i > 0$ as satisfying $\frac{2}{R_i+1} = E^{N(i)}(\epsilon)$, then pick a sequence
$\mathcal{V}_i$ of families of subsets of $X$ satisfying the above conditions. 
 Choose $m \ge 1$ such that $\mathcal{V}_m$ is uniformly bounded by $K$.

\textbf{Claim 1}: 
$\lsdim(U,X,E|[\epsilon,\infty),M_m)\leq n$ for all $U\in \mathcal{V}_m$, where $M_m:[\epsilon,\infty)\times (0,\infty)\to (0,\infty)$
defined by $M_m(x,y)=y+K+R_m$.\\
\textbf{Proof of Claim 1}:
Suppose $u\ge \epsilon$, $f:A\subset X \to \Delta(S)^{(n)}$ is $(E(u),E(u))$-Lipschitz and $R$-cobounded.
If $A\cap B(U,R_m)=\emptyset$, then extending $f$ to $g:A\cup U\to \Delta(S)^{(n)}$
by sending $U$ to a vertex $v_U$ not belonging to the carrier of $f(A)$ produces a $(u,u)$-Lipschitz function by \ref{BasicLipLemma}
that is $(R+K)$-cobounded. Indeed, $g^{-1}(st(v_U))=U$ is of diameter at most $K$
and $g^{-1}(st(v))=f^{-1}(st(v))$ for $v\ne v_U$ is of diameter at most $R$.
\par
Extend $f$ to $g:A\cup U\to \Delta(S)^{(n)}$ that is $(u,u)$-Lipschitz.
This may give rise to points $x\in U$ and $a\in A$ that are far away but
both $g(a)$ and $g(x)$ belong to the same star. To avoid that difficulty, consider
the vertices $S_1$ of the carrier of $f(A\cap B(U,R_m))$ and the vertices
$S_2\supset S_1$ of the carrier of $g(B(U,R_m))$. Let $r:S_2\to S_1$ be a retraction.
Change $g$ to $h$ by changing it on $B(U,R_m)$ to the composition of
$g$ and the induced retraction $\Delta(S_2)\to\Delta(S_1)$. $h$ is $(u,u)$-Lipschitz (see \ref{BasicLipLemma}), it extends $f$, and to check it is
$(R+K+R_m)$-cobounded all one has to do is look at $h^{-1}(st(v))$ for $v\in S_1$.
This set contains $a\in A\cap B(U,R_m)$, its intersection with $A$ is of diameter at most $R$, and the remainder is contained in $U$. Therefore any two points of $h^{-1}(st(v))$ are at the distance at most $R+R_m+K$.
That completes the proof of Claim 1.

Define $P(m)=1$ and $P(i)=P(i+1)\cdot n_i$ for $i < m$.

\textbf{Claim 2}: 
For each $1\leq i\leq m$ there is a function $M_{i}:[E^{N(i)}(\epsilon),\infty)\times (0,\infty)\to (0,\infty)$ such that
$\lsdim(U,X,E^{P(i)}|[E^{N(i)}(\epsilon),\infty),M_i)\leq n$ for all $U\in \mathcal{V}_i$.\\
\textbf{Proof of Claim 2}: $i=m$ is taken care of by by Claim 1.
Suppose $i < m$ and $M_{i+1}$ exists.
Put $q=n(i)$ and $\alpha=E^{P(i+1)}:[E^{N(i+1)}(\epsilon),\infty)\to (0,\infty)$. Applying Lemma \ref{BasicAsdimLemma2}
one gets the existence of a function $M_{i}:[E^{N(i)}(\epsilon),\infty)\times (0,\infty)\to (0,\infty)$
such that $\lsdim(U,X,E^{P(i)}|[E^{N(i)}(\epsilon),\infty),M_{i})\leq n$ for all $U\in \mathcal{V}_{i}$.

Applying Claim 2 to $i=1$ we get $\lsdim(X,X,E^{P(1)}|[\epsilon,\infty),M_1)\leq n$.
That implies existence of an $(\epsilon,\epsilon)$-Lipschitz function $g:X\to \Delta(S)^{(n)}$
that is $K$-cobounded for some $K > 0$. Thus, $\lsdim(X)\leq n$.
\end{proof}

Now we can derive a more general result than Theorem \ref{DranZariResult}.
\begin{Corollary}\label{CountableAsDimImpliesPropA}
Any space $X$ of countable asymptotic dimension has Property A.
\end{Corollary}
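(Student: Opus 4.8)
The plan is to apply Theorem \ref{MainResult} with $n=\infty$, after observing that for $n=\infty$ its hypothesis is automatically satisfied. Indeed, $\Delta(X)^{(\infty)}=\Delta(X)$, and the only role played by the assumption that $\Delta(X)^{(n)}$ be a large scale absolute extensor of $X$ in the proof of Theorem \ref{MainResult} is to supply a non-decreasing function $E\colon(0,\infty)\to(0,\infty)$ with $E(x)<x$ such that every $(E(x),E(x))$-Lipschitz partition of unity $f\colon A\subset X\to\Delta(S)$ extends to an $(x,x)$-Lipschitz partition of unity on $X$; all subsequent control of coboundedness is carried out internally, through the auxiliary functions $M_i$ and Lemmas \ref{BasicAsdimLemma} and \ref{BasicAsdimLemma2}. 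For $n=\infty$, Theorem \ref{NormalityTheorem} (every metric space is large scale collectionwise normal) provides precisely such an $E$, so the argument of Theorem \ref{MainResult} goes through unchanged.

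Granting this, since $X$ is of countable asymptotic dimension, Theorem \ref{MainResult} yields $\lsdim(X)\leq\infty$; unwound as in the concluding lines of its proof, this means that for every $\epsilon>0$ there is a cobounded $(\epsilon,\epsilon)$-Lipschitz partition of unity $f_\epsilon\colon X\to\Delta(S)$, i.e. one for which $\{f_\epsilon^{-1}(st(v))\}_{v\in S}$ is uniformly bounded. This is Condition 1 of Theorem \ref{MainGeneralizationThirdThread} in the case $n=\infty$, which, by the Remark following that theorem, is the statement that $X$ is large scale paracompact, and in particular an exact space in the sense of Dadarlat--Guentner \cite{DaGu}.

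It remains only to upgrade an arbitrary cobounded Lipschitz partition of unity to a \emph{barycentric} one, which is exactly what Property A demands; for this one invokes the literature, namely that large scale paracompact spaces have Property A (\cite{CDV4}; see also \cite{Willett}). I expect this last implication to be the sole genuine obstacle, everything preceding it being a direct consequence of Theorem \ref{MainResult} together with Theorem \ref{NormalityTheorem}. As a sanity check, specializing to spaces of straight finite decomposition complexity --- where one may take $n_i=2$ for all $i$ in Definition \ref{CountableAsdim} --- this recovers Theorem \ref{DranZariResult}.
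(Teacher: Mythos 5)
Your reduction via Theorem \ref{MainResult} with $n=\infty$, with the extension hypothesis rendered vacuous by Theorem \ref{NormalityTheorem}, is exactly the paper's first step, and the conclusion you extract from it --- for every $\epsilon>0$ a cobounded $(\epsilon,\epsilon)$-Lipschitz partition of unity on $X$ --- is correct. The gap is in the final third. By the paper's own dictionary (see the Introduction), the existence of cobounded $(\epsilon,\epsilon)$-Lipschitz partitions of unity for all $\epsilon$ is the defining property of \emph{exact} spaces; large scale paracompactness asks in addition for Lebesgue number at least $\frac{1}{\epsilon}$, and Property A asks for \emph{barycentric} partitions of unity. For a general metric space neither upgrade is free, and both of your appeals to the literature overstate what is available: the Remark after Theorem \ref{MainGeneralizationThirdThread} claims equivalence with Property A only under bounded geometry, and neither this paper nor \cite{CDV4} asserts that large scale paracompact metric spaces have Property A without some such finiteness hypothesis. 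So the sentence ``it remains only to upgrade \dots for this one invokes the literature'' is precisely where an argument is missing, and the facts you invoke are not quotable in the generality you need.

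The paper closes this hole by using the countable asymptotic dimension hypothesis a second time: it observes that such an $X$ is large scale finitistic in the sense of \ref{LSFinitisticDef} (taking all $R_i\ge 2r$ in Definition \ref{CountableAsdim} writes $X$ as a union of finitely many uniformly bounded $2r$-disjoint families, and enlarging their elements by $r$ gives uniformly bounded covers of Lebesgue number at least $r$ and bounded multiplicity), hence large scale weakly paracompact. Theorem 4.9 of \cite{CDV4} then upgrades ``cobounded $(\epsilon,\epsilon)$-Lipschitz partitions of unity for all $\epsilon$'' to large scale paracompactness, and the equivalence of large scale paracompactness with Property A, valid for large scale finitistic spaces, finishes the proof. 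In your write-up the hypothesis of countable asymptotic dimension enters only through Theorem \ref{MainResult}, so the multiplicity/point-finiteness input needed for both bridging steps --- in particular for converting the partitions of unity into barycentric ones --- is never supplied; that is the concrete missing step.
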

\begin{proof}
We are applying Theorem \ref{MainResult} when $n=\infty$
in which case the assumption $\Delta(X)^{(n)}$ is a large scale absolute extensor of $X$ is vacuous (in view of Theorem \ref{NormalityTheorem}).

Notice $X$ is large scale finitistic (see \ref{LSFinitisticDef}), hence it is large scale weakly paracompact. In view of Theorem \ref{MainResult} for each $\epsilon > 0$
there is an $(\epsilon,\epsilon)$-Lipschitz partition of unity on $X$ that is cobounded.
As shown in \cite{CDV4} (use Theorem 4.9 there which says that if $X$ is large scale weakly paracompact
and for each $\epsilon > 0$
there is an $(\epsilon,\epsilon)$-Lipschitz partition of unity on $X$ that is cobounded,
then $X$ is large scale paracompact),
a large scale finitistic metric space $X$ has Property A if and only if it is large scale paracompact. Consequently, $X$ has Property A.
\end{proof}

\begin{Remark}
Theorem \ref{MainResult} is related to the problem of A.Dranishnikov about the equality of asymptotic dimension $\asdim(X)$ of proper metric spaces $X$ to the covering dimension of their Higson corona $\nu(X)$ (see \cite{Dran AsyTop}). As is shown in \cite{Dran AsyTop}  and \cite{DranKeesUsp} the two numbers are equal in case of $\asdim(X)$ being finite. Theorem \ref{MainResult} improves that result for spaces of countable asymptotic dimension. Note (see \cite{DydakMitra lsAE}) that $\dim(\nu(X))\leq n$ is equivalent to the $n$-sphere $S^n$ being a large scale absolute extensor of $X$.
\end{Remark}

\begin{Remark}
In a recent paper \cite{RR}, D. A. Ramras and B. W. Ramsey introduced independently the concept
of a metric family $\X$ to have \emph{weak straight finite decomposition complexity}
with respect to the sequence 
$(k_1, k_2, \ldots)$ $($$k_i\in \N$$)$ if
 for every sequence
$R_1 < R_2 < \ldots$ of positive numbers, there exists an $n \in \N$  and metric families $\X_0, \X_1, \X_2, \ldots, \X_n$
such that $\X = \X_0$, the family $\X_{i}$ is $(k_{i+1}, R_{i+1})$--decomposable over $\X_{i+1}$, and the
family $\X_n$ is uniformly bounded. $\X$ has weak  straight finite decomposition complexity (wsFDC)
if it has wsFDC with respect to some sequence $(k_1, k_2, \ldots)$.

Notice that, in case of $\X$ consisting of a single space $X$, the above definition amounts to saying that $X$ has countable asymptotic dimension. Therefore our Corollary \ref{CountableAsDimImpliesPropA}
answers positively Question 4.7 of \cite{RR}.
\end{Remark}

\end{document}